\documentclass{amsart}
\usepackage{preamble}

\usepackage{parskip}
\numberwithin{equation}{section}

\newcommand{\altref}[2]{%
    \textup{(}\hyperref[#1]{\textup{#2}}\textup{)}%
}

\newcommand{\sint}[0]{\textstyle\int}
\newcommand{\ssum}[0]{\textstyle\sum}

\bibliography{refs}

\title{Invariant measures for the open KPZ equation: the Gaussian case}
\author{James Bona-Landry}
\address{Department of Mathematics, University of Toronto}
\email{james.bonalandry@mail.utoronto.ca}

\begin{document}
    \begin{abstract}
        In \cite{gu2024integrationpartsinvariantmeasure}, Quastel and Gu use Stein's equation and integration by parts to give a direct proof that drifted Brownian motions are stationary (modulo height shifts) for the full-line KPZ equation. In this article, we consider the open KPZ equation with boundary conditions $\partial_x h(t,0) = \partial_x h(t,1) = \alpha$ for a general real parameter $\alpha \in \R$, and emulate the approach of Quastel and Gu to provide a similar proof that Brownian motion with constant drift $\alpha$ is invariant (modulo height shifts) in this case.
    \end{abstract}
    \maketitle

    \section{Introduction}\label{sec:introduction}

    \subsection{Background}\label{sec:introduction, subsec:background}
    The open Kardar-Parisi-Zhang (KPZ) equation is a stochastic partial differential equation modeling the random evolution of an interface $h \in C(\R_+ \times [0,1])$ subject to Neumann-type boundary conditions determined by two real parameters $\alpha,\beta \in \R$. Formally, it is written as
    \begin{equation}\label{eqn:KPZ}
        \partial_t h = \tfrac{1}{2}(\partial_x h)^2 + \tfrac{1}{2}\partial_x^2 h + \xi, \quad \partial_x h(t,0) = \alpha, \ \partial_x h(t,1) = -\beta, \quad h(0,x) = h_0(x) \tag{$\text{KPZ}_{\alpha,\beta}$}
    \end{equation}
    where $\xi$ denotes space-time white noise, the distribution-valued Gaussian field on $\R_+ \times [0,1]$ that is delta-correlated in space and time. Due to the singular nature of $\xi$, the rigorous interpretation of \eqref{eqn:KPZ} is a non-trivial task. Based on the linear theory, we expect the ``solution'' $h$ to have spatial regularity at most H\"older-$\frac{1}{2}-$; the spatial derivative $\partial_x h$ then exists only as a distribution, rendering the nonlinearity and pointwise boundary conditions appearing in \eqref{eqn:KPZ} ill-defined. A general framework for circumventing these issues and obtaining a rigorous solution theory is provided by Hairer's work on regularity structures \cite{Hairer2013, Gerencs_r_2018}. A simpler approach that works in the case of the open KPZ equation, however, (and which leads to the same result as Hairer's framework) is to define the solution as $h(t,x) := \log Z(t,x)$ for $Z \in C(\R_+ \times [0,1])$ satisfying the open stochastic heat equation
    \begin{equation}\label{eqn:SHE}
        \partial_t Z = \tfrac{1}{2}\partial_x^2 Z + Z\xi, \quad \partial_x Z(t,0) = \hat{\alpha} Z(t,0), \ \partial_x Z(t,1) = -\hat{\beta} Z(t,1), \quad Z(0,x) = e^{h_0(x)} \tag{$\text{SHE}_{\hat{\alpha},\hat{\beta}}$}
    \end{equation}
    in the It\^{o}-mild sense with $\hat{\alpha} = \alpha - \frac{1}{2}$ and $\hat{\beta} = \beta - \frac{1}{2}$. This solution is referred to as the \emph{Hopf-Cole} solution to \eqref{eqn:KPZ}. Note that there is a shift by $\frac{1}{2}$ in the boundary parameters here; although inconsistent with what would be suggested by a formal calculation, these shifts are motivated by \cite{Goncalves2020} and have become standard convention in the literature. Very roughly, they arise from viewing $h$ as the scaling limit of a sequence of approximating WASEPs, in which the formal boundary conditions have a natural physical interpretation.

    In recent years, one of the central focuses of research activity surrounding \eqref{eqn:KPZ} has been characterizing and understanding its (almost) stationary distributions. The world ``almost'' here reflects the fact these distributions are not truly stationary (due to the fact that $h$ has a deterministic upward drift), but rather invariant modulo height shifts. Descriptions of varying degrees of complexity were obtained by \cite{corwin2023stationarymeasureopenkpz, Bryc_2022, Barraquand_2022}, and it was ultimately shown in \cite{Bryc_2022} that if $\alpha + \beta \geq 0$ and $\min(\alpha,\beta) > -1$, then there is a unique stationary distribution $\mu_{\alpha,\beta}$ for the height-shifted process $(h(t,x)-h(t,0))_{x \in [0,1]}$ that decomposes into a sum of independent processes as $\mu_{\alpha,\beta} =_{\mrm{law}} W + X$, where $W \in C([0,1])$ is variance $\frac{1}{2}$ Brownian motion, and $X \in C([0,1])$ is absolutely continuous in law with respect to variance $\frac{1}{2}$ Brownian motion, with Radon-Nikodym derivative proportional to
    \begin{align*}
        e^{-\beta X(1)}\left(\int_{0}^{1} e^{-2X(x)} dx\right)^{-(\alpha+\beta)}.
    \end{align*}
    Notice that in the special case $\alpha + \beta = 0$, the nonlinearity collapses and $X$ reduces to a Gaussian process, specifically variance $\frac{1}{2}$ Brownian motion with constant drift $\alpha = -\beta$. By independence, the invariant state $\mu_{\alpha,-\alpha} = W + X$ then becomes standard Brownian motion with constant drift $\alpha$. The goal of this paper is to provide a ``simple'' proof (in the sense of being, modulo a key symmetry, a series of straightforward calculations) of invariance in this case by emulating the approach taken by Quastel and Gu in \cite{gu2024integrationpartsinvariantmeasure}, which leverages the Gaussian nature of the distribution.

    \subsection{Organization}\label{sec:introduction, subsec:organization}
    As an application of their approach to an analogous setting, the organization (and notation) of this paper has been deliberately chosen so as to mimic that of the blueprint paper \cite{gu2024integrationpartsinvariantmeasure} of Quastel and Gu. We begin by outlining the approach and general setup in Section \ref{sec:introduction, subsec:method} before presenting the main results in Section \ref{sec:main results}. Sections \ref{sec:proof of auxiliary result} and \ref{sec:key symmetry} are then dedicated to the proofs of these main results, with the latter focusing on a key symmetry, Proposition \ref{prop:key symmetry}, on which these proofs rely.

    \subsection{Integration by parts method}\label{sec:introduction, subsec:method}
    Because the invariance of $\mu_{\alpha,\beta}$ is only up to height shifts, it will be convenient to work with $u := \partial_x h$ instead of $h$, which is defined as the solution to the open stochastic Burgers' equation
    \begin{equation}\label{eqn:SBE}
        \partial_t u = \tfrac{1}{2}\partial_x u^2 + \tfrac{1}{2}\partial_x^2 u + \partial_x \xi, \quad u(t,0) = \alpha, \ u(t,1) = -\beta, \quad u(0,x) = u_0(x). \tag{$\text{SBE}_{\alpha,\beta}$}
    \end{equation}
    This is a stochastic process in the space of distributions, and the ``almost'' invariance of standard Brownian motion with drift $\alpha$ under the flow of \altref{eqn:KPZ}{KPZ$_{\alpha,-\alpha}$} is equivalent to the true invariance of mean $\alpha$ (spatial) white noise under the flow of \altref{eqn:SBE}{SBE$_{\alpha,-\alpha}$}. Our goal will thus be to show that if \altref{eqn:SBE}{SBE$_{\alpha,-\alpha}$} is initialized with mean $\alpha$ spatial white noise, then the solution remains as this at all later times $t > 0$. Equivalently, $\tilde{u}_t := u_t - \alpha$ is spatial white noise for every $t > 0$. By Stein's equation, this will follow if we can show that
    \begin{equation}\label{eqn:steins}
        \bb{E}[F(\inn{\tilde{u}_t,f}) \inn{\tilde{u}_t,f}] = \norm{f}_{L^2([0,1])}^2 \bb{E}F'(\inn{\tilde{u}_t,f}), \quad \text{for every} \ f \in C_c^{\infty}([0,1]), \ F \in C_c^{\infty}(\R),
    \end{equation}
    where $\bb{E} = \mbf{E}\bb{E}^{\mrm{init}}$ denotes expectation over the probability space $\Omega = \Omega^{\mrm{noise}} \times \Omega^{\mrm{init}}$ associated to $u_t$. To do so, we approximate $u$ by a smooth field for which we have access to a nice polymer representation, and compute the left-hand side of \eqref{eqn:steins} making use of the following integration by parts identity from the Malliavin calculus:
    
    \begin{proposition}[\cite{nualart2006malliavin}, Chapter 1]\label{prop:malliavin integration by parts}
        Let $W$ be either space-time white noise on $\mathbb{R}_+ \times [0,1]$ or spatial white noise on $[0,1]$. Then for any deterministic $f \in L^2$ on the corresponding domain and any random variable $F$ in the Malliavin-Sobolev space $\bb{D}^{1,2}$, we have that
        \begin{align*}
            \bb{E}\left[F \inn{W,f}\right] = \bb{E}\left[\inn{\ms{D}F, f}_{L^2}\right],
        \end{align*}
        where $\ms{D}$ denotes the Malliavin derivative operator associated to $W$.
    \end{proposition}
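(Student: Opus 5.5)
The plan is to prove the identity first for smooth cylindrical random variables, and then extend it to all of $\bb{D}^{1,2}$ by density and continuity. Let $H$ denote the underlying Hilbert space, so $H = L^2(\R_+ \times [0,1])$ or $H = L^2([0,1])$. In both cases $W$ is an isonormal Gaussian process on $H$: the map $g \mapsto \inn{W,g}$ is a linear isometry from $H$ into a centered Gaussian subspace of $L^2(\Omega)$. We may assume $f \neq 0$; after rescaling, $\norm{f}_H = 1$. Complete $e_1 := f$ to an orthonormal basis $(e_i)$ of $H$.

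\textbf{Cylindrical case.} Take $F = \varphi(\inn{W,e_1},\dots,\inn{W,e_n})$ with $\varphi \in C_c^\infty(\R^n)$. Since the basis is orthonormal and $W$ is isonormal, the variables $\xi_i := \inn{W,e_i}$ are i.i.d.\ standard Gaussians. By definition of the Malliavin derivative, $\ms{D}F = \sum_i \partial_i\varphi(\xi)\, e_i$, so
\[
\inn{\ms{D}F,f}_{L^2} = \partial_1\varphi(\xi).
\]
The identity therefore reduces to the one-dimensional Gaussian integration by parts
\[
\bb{E}[\varphi(\xi)\xi_1] = \bb{E}[\partial_1\varphi(\xi)].
\]
This follows from Fubini together with $\int \varphi(x) x e^{-x^2/2}\,dx = \int \varphi'(x) e^{-x^2/2}\,dx$, which holds because $x e^{-x^2/2} = -\tfrac{d}{dx}e^{-x^2/2}$ and the boundary terms vanish.

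A general smooth cylindrical variable $\varphi(\inn{W,h_1},\dots,\inn{W,h_m})$ reduces to this form by a linear change of variables. Indeed, apply Gram--Schmidt to $f,h_1,\dots,h_m$ to get an orthonormal family starting with $e_1 = f$. Then $F$ is a smooth function of finitely many $\xi_i$, and the chain rule shows that the formula for $\ms{D}F$ is consistent with this rewriting.

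\textbf{Extension.} By construction, $\bb{D}^{1,2}$ is the closure of the smooth cylindrical variables under the norm
\[
\norm{F}_{1,2}^2 = \bb{E}F^2 + \bb{E}\norm{\ms{D}F}_H^2.
\]
Take $F_k \to F$ in this norm. For the left side, Cauchy--Schwarz and $\inn{W,f} \in L^2(\Omega)$ give $\bb{E}[F_k\inn{W,f}] \to \bb{E}[F\inn{W,f}]$. For the right side,
\[
\bigl|\bb{E}\inn{\ms{D}F_k - \ms{D}F, f}\bigr| \le \norm{f}_H\,\bigl(\bb{E}\norm{\ms{D}F_k-\ms{D}F}_H^2\bigr)^{1/2} \to 0.
\]
Passing to the limit proves the identity for all $F \in \bb{D}^{1,2}$.

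\textbf{Main obstacle.} Essentially the only subtle point is that $\ms{D}$ is well defined, that is, closable, so that the closure defining $\bb{D}^{1,2}$ makes sense. This is part of the standard theory (Nualart, Ch.~1), and closability is itself usually deduced from the cylindrical version of this same identity applied to products $FG$. Since the proposition is quoted from that reference, I would take the construction of $\bb{D}^{1,2}$ as given and present only the two steps above.
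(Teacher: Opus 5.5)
Your argument is correct and is the standard proof from Nualart's Chapter 1: orthonormalize with $e_1 = f$, apply one-dimensional Gaussian integration by parts to smooth cylindrical variables, then pass to the closure $\mathbb{D}^{1,2}$; the paper simply cites this result rather than proving it, so your route matches the one it relies on. One small slip: when $f \notin \mathrm{span}(h_1,\dots,h_m)$, the function $x \mapsto \varphi(Ax)$ produced by your linear change of variables is only in $C_b^\infty$, not $C_c^\infty$, but the one-dimensional integration by parts still holds because the boundary terms $\varphi(x)e^{-x^2/2}$ vanish at $\pm\infty$ by Gaussian decay.
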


    Our particular choice of approximation for $u = \partial_x(\log Z)$ is motivated by the formal Feynman-Kac formula
    \begin{equation}\label{eqn:SHE feynman-kac representation}
        Z(t,x) = \bb{E}^{\mrm{RBM}}_x\left[e^{\int_{0}^{t} \xi(t-s,R_s) ds - \frac{1}{2}\var\left(\int_{0}^{t} \xi(t-s,R_s) ds\right) - \frac{1}{2}\left(\hat{\alpha}\ms{L}^0_t + \hat{\beta}\ms{L}^1_t\right) + h_0(R_t)}\right],
    \end{equation}
    which would make sense and characterize the solution to \eqref{eqn:SHE} if $\xi$ and $h_0$ were sufficiently ``nice'' (see, e.g. \cite{Lejay_2016, scorolli2021feynmankacformulaheatequation}). Here, $\bb{E}^{\mrm{RBM}}_x$ denotes expectation with respect to a reflecting Brownian motion $(R_t)_{t \geq 0}$ on $[0,1]$ started from $x$, and $(\ms{L}^y_t)_{y \in [0,1], t \geq 0}$ denotes the jointly continuous version of the local time process associated to $R$, characterized by the occupation density formula
    \begin{align*}
        \int_{0}^{t} f(R_s) ds = \int_{0}^{1} f(y) \ms{L}^y_t dy \quad \text{for all bounded measurable $f : [0,1] \to \R$.}
    \end{align*}
    Specifically, for $\beta = -\alpha$ and $u_0$ given by mean $\alpha$ spatial white noise (and $h_0$ determined as $h_0(x) = \int_{0}^{x} u_0(y) dy$), we introduce regularized approximations of the terms appearing in \eqref{eqn:SHE feynman-kac representation} so as to allow us to take a logarithmic derivative in space and obtain a Feynman-Kac-type formula approximating $u$. More precisely, we let $\ep,\kappa > 0$ denote smoothing parameters -- one for the forcing noise and local time terms, and the other for the initial data -- that will characterize these approximations and, in the end, will be taken to zero separately to recover $u$. To make explicit the dependence of certain objects on these parameters, the pair $(\ep,\kappa)$ will be denoted by $\theta$ and included as superscript or subscript where appropriate. With this in mind, we define
    \begin{align*}
        \xi_{\ep}(t,x) := (e^{\ep\Delta_{\mrm{neu}}} \xi(t,\cdot))(x), \quad h_0^{\kappa}(x) := (e^{\kappa\Delta_{\mrm{neu}}} h_0)(y) dy,
    \end{align*}
    where $(e^{t\Delta_{\mrm{neu}}})_{t \geq 0}$ denotes the heat semigroup generated by the Laplacian on $[0,1]$ with homogeneous Neumann boundary conditions.
    
    \begin{remark}
        To briefly motivate this particular choice of smoothing, it shall be computationally convenient to ``unfold'' the problem and view reflecting Brownian motion on $[0,1]$ in terms of an underlying ``driving'' Brownian motion on $\R$ as $R_t = \varrho(B_t+x)$, where $\varrho : \R \to [0,1]$ denotes the reflection function
        \begin{equation}\label{eqn:reflection function}
            \varrho(x) = \begin{cases}
                x - \floor{x} & \text{if $\floor{x}$ is even}, \\
                \floor{x}+1 - x & \text{if $\floor{x}$ is odd}.
            \end{cases}
        \end{equation}
        Crucially, the Neumann heat kernel $p^{\mrm{neu}}(t,x;y) := (e^{t\Delta_{\mrm{neu}}} \delta_y)(x)$ behaves very nicely with respect to this reflection function. Indeed, when identified with its natural extension to $\R_+ \times \R^2$ via the expansion
        \begin{equation}\label{eqn:neumann heat kernel}
            p^{\mrm{neu}}(t,x;y) = \sum_{n \in \Z} \left(p(t,x-y-2n) + p(t,x+y-2n)\right), \quad p(t,x) := (2\pi t)^{-1/2} e^{-x^2/2t},
        \end{equation}
        it is even and 2-periodic -- and hence $\varrho$-invariant -- in both spatial variables. It is easy to see that any function of the form $f = e^{t\Delta_{\mrm{neu}}} f_0$ is then likewise $\varrho$-invariant, allowing us to freely move between the reflecting process $R$ and the underlying non-reflecting process $B$.
    \end{remark}
    
    For the local time terms, we do something slightly simpler and observe that since $p^{\mrm{neu}}(t,x;y)$ approximates $\delta_y$ in the weak sense on $C([0,1])$ as $t \to 0$, we have that $\int_{0}^{T} p^{\mrm{neu}}(t, R_s; y) ds = \int_{0}^{1} p^{\mrm{neu}}(t, x; y) \ms{L}^x_T dx \to \ms{L}^y_T$ as $t \to 0$ for every $y \in [0,1]$, and hence we can smoothly approximate $\ms{L}^0_t$ and $\ms{L}^1_t$ via $\int_{0}^{t} p^{\mrm{neu}}(\ep, R_s; 0) ds$ and $\int_{0}^{t} p^{\mrm{neu}}(\ep, R_s; 1) ds$, respectively. With this, we define our approximating field $Z^{\theta}$ as
    \begin{equation}\label{eqn:approximation to Z}
        Z^{\theta}(t,x) := \bb{E}^{\mrm{RBM}}_x\left[e^{\int_{0}^{t} \xi_{\ep}(t-s,R_s) - \frac{1}{2}C_{\ep}(R_s) ds + \int_{0}^{t} V_{\ep}(R_s) ds + h_0^{\kappa}(R_t)}\right],
    \end{equation}
    where $C_{\ep}(x) := p^{\mrm{neu}}(2\ep,x;x)$, $V_{\ep}(x) := -\tfrac{1}{2}\left(\hat{\alpha} p^{\mrm{neu}}(\ep,x;0) + \hat{\beta} p^{\mrm{neu}}(\ep,x;1)\right)$, and $\alpha + \beta = 0$. Here, we have written the normalization term for the stochastic integral pathwise as $C_{\ep}(R_s)$ simply for computational convenience. Since everything appearing in \eqref{eqn:approximation to Z} is by construction $\varrho$-invariant, we can now freely replace every instance of $R_s$ with $B_s+x$ and take a logarithmic derivative in $x$ to obtain an approximation to $u$:
    \begin{equation}\label{eqn:approximation to u, expanded form}
        u^{\theta}(t,x) := \frac{\bb{E}^{\mrm{BM}}_x\left[e^{-H^{\theta}(t,B)}\left(\int_{0}^{t} \partial_x \xi_{\ep}(t-s,B_s) - \frac{1}{2}C_{\ep}'(B_s) ds + \int_{0}^{t} V_{\ep}'(B_s) ds + u_0^{\kappa}(B_t)\right)\right]}{Z^{\theta}(t,x)},
    \end{equation}
    where $u_0^{\kappa} := \partial_x h_0^{\kappa}$ and we have used the abbreviation
    \begin{align*}
        -H^{\theta}(t,X) &:= \int_{0}^{t} \xi_{\ep}(t-s,X_s) - \tfrac{1}{2}C_{\ep}(X_s) ds + \int_{0}^{t} V_{\ep}(X_s) ds + h_0^{\kappa}(X_t).
    \end{align*}
    By viewing $\frac{e^{-H^{\theta}(t,B)}}{Z^{\theta}(t,x)}$ as the density of a ``polymer measure'' $\bb{P}^{\mrm{poly},\theta}_{t,x}$ with respect to the Wiener measure on continuous paths $B \in C_x(\R_+)$ started from $x$, we write this more compactly as
    \begin{equation}\label{eqn:approximation to u, polymer form}
        u^{\theta}(t,x) = \bb{E}^{\mrm{poly},\theta}_{t,x}\left[\int_{0}^{t} \partial_x \xi_{\ep}(t-s,X_s) - \tfrac{1}{2}C_{\ep}'(X_s) ds + \int_{0}^{t} V_{\ep}'(X_s) ds + u_0^{\kappa}(X_t)\right].
    \end{equation}
    The limiting Gibbs-type measure obtained by taking the smoothing parameters $\ep,\kappa \to 0$ shall be denoted by $\bb{P}^{\mrm{poly}}_{t,x}$, with corresponding expectation $\bb{E}^{\mrm{poly}}_{t,x}$.

    \section{Main Results}\label{sec:main results}

    \begin{theorem}\label{thm:invariance}
        For every $\alpha \in \R$, the flow of \altref{eqn:SBE}{SBE$_{\alpha,-\alpha}$} preserves mean $\alpha$ white noise: if $u_0$ is distributed as mean $\alpha$ white noise, then so too is $u_t$ for every $t > 0$.
    \end{theorem}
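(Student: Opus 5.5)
The strategy is to verify Stein's identity \eqref{eqn:steins} for $\tilde u_t = u_t-\alpha$, first for the smoothed field $u^\theta$ up to error terms that vanish in the limit, and then to pass to the limit.

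Fix $f\in C_c^\infty([0,1])$ and $F\in C_c^\infty(\R)$, and write $\tilde u^\theta_t := u^\theta(t,\cdot)-\alpha$. We will show
\begin{equation*}
\bb{E}\big[F(\inn{\tilde u^\theta_t,f})\inn{\tilde u^\theta_t,f}\big] = \norm{f}_{L^2}^2\,\bb{E}F'(\inn{\tilde u^\theta_t,f}) + \mathcal{E}^\theta,
\end{equation*}
with $\mathcal{E}^\theta\to 0$ as $\ep\to0$ and then $\kappa\to0$. Passing to the limit then relies on two standard facts: $Z^\theta\to Z$ locally uniformly in probability, and $\inn{u^\theta_t,f}=-\inn{\log Z^\theta_t,f'}\to\inn{u_t,f}$. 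Since $F$ and $F'$ are bounded, and the second moments of $\inn{u^\theta_t,f}$ are uniformly bounded, the identity passes to the limit. Stein's lemma then identifies the law of $\tilde u_t$ as white noise.

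To compute the left-hand side, insert the polymer form \eqref{eqn:approximation to u, polymer form} into $\inn{\tilde u^\theta_t,f}=\int f(x)\,(u^\theta(t,x)-\alpha)\,dx$. The result splits into four pieces:
\begin{itemize}
\item a space-time noise term $\bb{E}^{\mrm{poly},\theta}_{t,x}\int_0^t\partial_x\xi_\ep(t-s,X_s)\,ds$;
\item an initial-data term $\bb{E}^{\mrm{poly},\theta}_{t,x}\,u_0^\kappa(X_t)-\alpha$, where $u_0-\alpha$ is spatial white noise $\eta$;
\item a drift term from $-\tfrac12 C_\ep'$;
\item a boundary term from $V_\ep'$.
\end{itemize}
Apply Proposition \ref{prop:malliavin integration by parts} twice to the product of $F(\cdot)$ with the first two pieces: once with respect to $\xi$, and once with respect to $\eta$.

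Each Malliavin derivative produces two contributions. When it falls on $F$, it gives $F'$ times $\norm{\cdot}^2$-type quantities. When it falls on the polymer weight $e^{-H^\theta}/Z^\theta$, it gives two-replica covariance terms under $\bb{E}^{\mrm{poly},\theta}\otimes\bb{E}^{\mrm{poly},\theta}$.

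For the $F'$ terms, the $\eta$-part gives $F'\cdot\big\|\int f(x)\,\partial_y\bb{P}^{\mrm{poly},\theta}_{t,x}(X_t\in dy)\,dx\big\|^2$-like expressions, and the $\xi$-part gives the analogous space-time quantity. The key point is that their sum should equal $\norm{f}^2_{L^2}$ in the limit. This is the analogue of Quastel–Gu's identity, and I would derive it from the time-reversal/flux conservation of the polymer endpoint densities. Concretely, $\partial_t$ of the squared endpoint term is compensated exactly by the noise contribution, because $x\mapsto\bb{P}^{\mrm{poly}}_{t,x}(X_t\in\cdot)$ integrates to $1$ in $x$ under Neumann reflection.

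The remaining two-replica terms, together with the drift and boundary pieces, must cancel. Here I expect to need the key symmetry, Proposition \ref{prop:key symmetry}, in the following form: the two-replica expression from differentiating the weight is antisymmetric under swapping the replicas (equivalently, under a space-time reversal of the polymer path). Because $\beta=-\alpha$, the boundary potential is $V_\ep=-\tfrac12(\hat\alpha p^{\mrm{neu}}(\ep,\cdot;0)+\hat\beta p^{\mrm{neu}}(\ep,\cdot;1))$ with $\hat\alpha+\hat\beta=-1$. After integrating by parts in $x$ and using the reflection-invariance from the remark on $\varrho$, the $V_\ep'$ term should combine with the $C_\ep'$ term and with the boundary contribution produced by the mean $\alpha$. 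What remains is precisely the antisymmetric expression, which vanishes.

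The main obstacle is this last cancellation. The boundary terms in $u^\theta$ are not Gaussian, so they have to be absorbed using the constant drift $\alpha$ of the initial data. Verifying that the $\tfrac12$-shifts in $\hat\alpha,\hat\beta$ produce exactly the right local-time corrections, and that the symmetry in Proposition \ref{prop:key symmetry} kills everything else, is where the real work lies. I would first check it at $\alpha=0$, where $V_\ep$ reduces to $\tfrac14(p^{\mrm{neu}}(\ep,\cdot;0)+p^{\mrm{neu}}(\ep,\cdot;1))$, and then track the $\alpha$-dependence linearly through a Girsanov tilt $e^{\alpha(B_t-x)}$. This tilt converts mean-$\alpha$ initial data to mean-zero initial data, at the cost of exactly $\alpha$-weighted local times at $0$ and $1$.
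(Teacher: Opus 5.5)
You have the paper's outer framework right: Stein's identity for $u^\theta$, Malliavin integration by parts in $\xi$ and in the initial noise, the four-way split, then $\ep,\kappa\to0$. But the step that actually proves the theorem is missing, and the mechanism you propose for it does not work.

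\textbf{The $F'$ terms and the $F(Y)$ terms.} The $F'$ terms do not produce $\norm{f}_{L^2}^2$ through a conservation property of polymer endpoint laws. The polymer kernel is normalized in the endpoint, not in the starting point. So $\int_0^1\bb{P}^{\mrm{poly}}_{t,x}(X_t\in dy)\,dx$ is not Lebesgue measure in general: for large $t$ the endpoint law forgets $x$ and is tilted by $e^{h_0}$. The missing idea is It\^o's formula. Write $\partial_xp^{\mrm{neu}}(2\ep,x;y)=r_{2\ep}''(x-y)+r_{2\ep}''(x+y)$, with $r_{2\ep}$ the odd $2$-periodic antiderivative of $R_{2\ep}-\frac12$. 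Expanding $r_{2\ep}(X^1_s\pm X^2_s)$ under the two-replica polymer measure turns $\int_0^t\partial_xp^{\mrm{neu}}(2\ep,X^1_s;X^2_s)\,ds$ into time-$0$ endpoint terms, time-$t$ endpoint terms, and stochastic integrals. Each piece then has a definite role:
\begin{itemize}
\item The time-$0$ term is $r_{2\ep}(x_1-x_2)+r_{2\ep}(x_1+x_2)\to\frac12\sgn(x_1-x_2)+\frac12\sgn(x_1+x_2)-x_1$. After integrating by parts in $x_2$, this gives $\norm{f}_{L^2}^2\bb{E}F'(Y)$.
\item The time-$t$ term combines with the initial-data term $\sigma(X^1_t)\mbf{1}_{[0,\varrho(X^2_t)]}(\varrho(X^1_t))$ through the identity $\frac12\zeta(\frac12(x_1-x_2))+\frac12\zeta(\frac12(x_1+x_2))+\sigma(x_1)\mbf{1}_{[0,\varrho(x_2)]}(\varrho(x_1))=\zeta(\frac12x_1)$. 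The result depends on $X^1_t$ alone, so it is killed by $\int_0^1f'=0$.
\item The $F(Y)$ terms, together with $\int_0^tV_\ep'(X_s)\,ds$ (also expanded by It\^o's formula), cancel algebraically via the same identity and $\frac12\zeta+\frac12\sigma=\zeta(\frac12\cdot)$.
\end{itemize}

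\textbf{The key symmetry.} You have also misplaced Proposition \ref{prop:key symmetry}. It is a quantitative estimate showing that the polymer expectation of the leftover It\^o integral $\int_0^tR_\ep(X^1_s-X^2_s)\,d(X^1_s-X^2_s)$ vanishes. It is needed exactly where the replicas are \emph{not} exchangeable: the cross-replica $F'$ term with $x_1\neq x_2$, and the reflected combinations $X^1+X^2$. Swap-antisymmetry only removes the part after $X^1-X^2$ first hits $2\Z$. Before that time one needs Lemmas \ref{lem:separating numerator and denominator}--\ref{lem:bounds for Z and U} and the Brownian moment bounds. Conversely, the same-start covariance terms are not antisymmetric. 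Their reflected part $R_{2\ep}'(X_s+\tilde X_s)$ is invariant under swapping the replicas and does not vanish; it is precisely what must cancel against the boundary and initial-data pieces.

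\textbf{The reduction to $\alpha=0$.} The Girsanov reduction fails as stated. By Tanaka, $R_t=x+\int_0^t\sigma(X_s)\,dB_s+\frac12(\ms{L}^0_t-\ms{L}^1_t)$. So $e^{\alpha R_t}$ does cancel the $\alpha$-dependent part $-\frac{\alpha}{2}(\ms{L}^0_t-\ms{L}^1_t)$ of the boundary weight. However, the remaining factor $e^{\alpha\int_0^t\sigma(X_s)\,dB_s}$ turns the reference path into reflected Brownian motion with drift $\alpha$. Equivalently, it adds a transport term $\alpha\partial_x$ to the equation for $u-\alpha$. That is not the $\alpha=0$ problem, so checking $\alpha=0$ does not suffice. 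The paper handles $\alpha$ directly, by It\^o's formula on $-\alpha\int_0^t(r_\ep''(X_s)-r_\ep''(X_s+1))\,ds=-\alpha(\sigma_\ep(X_t)-\sigma_\ep(x))+\cdots$. The time-$t$ part cancels the $+\alpha\sigma(X_t)$ coming from the mean of $u_0$. The time-$0$ part cancels the centering $-\alpha\int_0^1f$.
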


    Our principal instrument for establishing Theorem \ref{thm:invariance} will be the following auxiliary result, Proposition \ref{prop:auxiliary result}. Before stating it, we introduce some notation that will be convenient going forward. Firstly, $\zeta$ shall denote the zig-zag function $\R \to (-1,1)$ which is zero on the integers and which decreases linearly from $1$ to $-1$ on each open interval $(n, n+1)$. Next, $\sigma$ shall denote the alternating sign function $\R \to \{-1,0,1\}$ which takes the value $+1$ on ``even'' intervals $(2n, 2n+1)$, $-1$ on ``odd'' intervals $(2n-1, 2n)$, and zero on the integers. For a given parameter $p > 0$, the notation $\zeta_p$ and $\sigma_p$ will be used to denote smooth functions that converge pointwise to $\zeta$ and $\sigma$, respectively, as $p \to 0$. Finally, $\bb{E}^{\mrm{poly},\theta}_{t,x_1,x_2}$ shall denote expectation with respect to the product measure $\bb{P}^{\mrm{poly},\theta}_{t,x_1} \times \bb{P}^{\mrm{poly},\theta}_{t,x_2}$, i.e. with respect to two independent polymer paths $X^1$ and $X^2$ started from $x_1$ and $x_2$, respectively.

    \begin{proposition}\label{prop:auxiliary result}
        Given $\alpha \in \R$ and $t > 0$, let $u^{\theta}_t = u^{\theta}(t,\cdot)$ denote the approximating field \eqref{eqn:approximation to u, expanded form}/\eqref{eqn:approximation to u, polymer form} at time $t > 0$ and let $\tilde{u}^{\theta}_t = u^{\theta}_t - \alpha$. Moreover, given $f \in C_c^{\infty}([0,1])$, let $Y = \inn{\tilde{u}^{\theta}_t,f}$. Then, for any $F \in C_c^{\infty}(\R)$, it holds that
        \begin{equation}\label{eqn:steins for approximating field}
            \begin{split}
                \bb{E} F(Y) Y &= \bb{E} F'(Y) \int_{[0,1]^2} dx_1 dx_2 \ f(x_1) f'(x_2)\left(\tfrac{1}{2}\zeta_{2\ep}(\tfrac{1}{2}(x_1-x_2)) + \tfrac{1}{2}\zeta_{2\ep}(\tfrac{1}{2}(x_1+x_2))\right) \\
                &\quad\quad + \Gamma^{\mrm{noise}} + \Gamma^{\mrm{bdy}} + \Gamma^{\mrm{init}} + \mc{E},
            \end{split}
        \end{equation}
        where $\mc{E}$ is an error term that vanishes as $\ep \to 0$, and $\Gamma^{\mrm{noise}}$, $\Gamma^{\mrm{bdy}}$, and $\Gamma^{\mrm{init}}$ are defined as in \eqref{eqn:contribution from noise}, \eqref{eqn:contribution from boundary data}, and \eqref{eqn:contribution from initial data}, respectively.
    \end{proposition}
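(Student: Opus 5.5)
The plan follows Quastel and Gu: compute $\bb{E} F(Y) Y$ by splitting $Y$ into its linear-in-randomness pieces and removing each Gaussian factor with Proposition \ref{prop:malliavin integration by parts}. By \eqref{eqn:approximation to u, polymer form},
\[
Y = \int_0^1 f(x)\bigl(u^\theta(t,x)-\alpha\bigr)dx ,
\]
and $u^\theta(t,x)$ is a polymer average of three pieces: the noise term $\int_0^t \partial_x\xi_\ep(t-s,X_s)\,ds$, the deterministic drift $\int_0^t\bigl(V_\ep'-\tfrac12 C_\ep'\bigr)(X_s)\,ds$, and the initial term $u_0^\kappa(X_t)$. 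We write $u_0^\kappa = \alpha + (\text{white noise smoothed by } e^{\kappa\Delta_{\mrm{neu}}})$; because the Neumann semigroup preserves constants, the $\alpha$ here is the one subtracted in $\tilde u$. The term $\bb{E} F(Y)Y$ then splits into a space-time noise part, a boundary/drift part, and a spatial-noise initial part.

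\textbf{Noise and initial parts.} For each of the two random parts, we write the polymer expectation as a Wiener integral against a random kernel and apply Proposition \ref{prop:malliavin integration by parts} with respect to $\xi$ (for the noise part) or $\tilde u_0$ (for the initial part), conditioning on the other source. The Malliavin derivative lands on two factors.

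1. \emph{Derivative on $F(Y)$.} This gives $F'(Y)\,\ms{D}Y$. Since $\ms{D}Y$ is itself a derivative of a log-partition function, $\ms{D}_{s,y}u^\theta$ produces a covariance under two independent polymer paths $X^1, X^2$, so the expectation $\bb{E}^{\mrm{poly},\theta}_{t,x_1,x_2}$ appears.

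2. \emph{Derivative on the polymer weight $e^{-H^\theta}/Z^\theta$.} This produces the replica terms. We collect these, together with the non-leading parts of item 1, into $\Gamma^{\mrm{noise}}$ and $\Gamma^{\mrm{init}}$. We do not need to evaluate these terms here, only to define them as the explicit leftovers. Their cancellation is presumably the content of Proposition \ref{prop:key symmetry} and is deferred.

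The leading diagonal contribution in item 1 comes from pairing $\partial_x\xi_\ep(t-s,X^1_s)$ with $\xi_\ep(t-s,X^2_s)$. Its covariance is
\[
\partial_{x}\, p^{\mrm{neu}}\bigl(2\ep, X^1_s; X^2_s\bigr).
\]
We integrate it against $f(x_1)$, in $s$ over $[0,t]$, and against the $x_2$-derivative from $\ms{D}Y$. We then use Itô/Tanaka-type identities for the unfolded Brownian paths, exploiting the $\varrho$-invariance of the kernel \eqref{eqn:neumann heat kernel} to write it as a sum over images $x_1\mp x_2-2n$. An antiderivative in the spatial variable of the periodized smoothed kernel is exactly
\[
\tfrac12\,\zeta_{2\ep}\bigl(\tfrac12(x_1\mp x_2)\bigr),
\]
because the antiderivative of $\sum_n\bigl(\delta(z-2n)-\tfrac12\bigr)$ is a sawtooth of this form. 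This is why $\zeta_{2\ep}$ shows up paired with $f(x_1)f'(x_2)$ after one integration by parts in $x_2$ against $f$. The mean $\tfrac12$ subtraction arises from the zero mode, which is matched with the drift of $Y$.

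\textbf{Boundary part and main obstacle.} The drift term, carrying $V_\ep'$ and $C_\ep'$, has no Gaussian factor to integrate. Using $\hat\alpha+\hat\beta = -1$ (from $\alpha+\beta=0$), we rewrite $V_\ep'$ through $\partial_x p^{\mrm{neu}}(\ep,\cdot;0)$ and $\partial_x p^{\mrm{neu}}(\ep,\cdot;1)$, which approximate signed boundary local-time derivatives expressed through $\sigma_\ep$. The $C_\ep'$ term is near-boundary, since $C_\ep$ is constant away from the endpoints up to exponentially small errors, and merges with these. Everything here is collected into $\Gamma^{\mrm{bdy}}$. Terms that are $o(1)$ as $\ep\to0$, uniformly in $\kappa$ (image sums away from the diagonal, and mismatches between the $\zeta_{2\ep}$ and $\zeta$ normalizations), go into $\mc{E}$; they are controlled by moment bounds on $F,F'$ and Gaussian tail bounds on $p^{\mrm{neu}}$.

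The main obstacle is the bookkeeping of the diagonal noise term: correctly identifying the two-replica term that gives the $\zeta_{2\ep}$ kernel, while keeping exact track of what is relegated to $\Gamma^{\mrm{noise}}$, so that the later symmetry can cancel $\Gamma^{\mrm{noise}}+\Gamma^{\mrm{bdy}}+\Gamma^{\mrm{init}}$. I would first do the computation on the unfolded line, with all kernels as image sums, and only at the end fold back to $[0,1]^2$.
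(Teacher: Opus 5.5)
Your outline of the Malliavin step matches the paper: integration by parts for the two Gaussian sources, with two replicas arising from the derivative of $Y$. There is, however, a genuine gap. The proposal never accounts for the stochastic integrals that It\^o's formula produces, and it misidentifies what Proposition \ref{prop:key symmetry} is for.

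The $\Gamma$'s are not free ``leftovers''. The expressions \eqref{eqn:contribution from noise}--\eqref{eqn:contribution from initial data} involve only polymer expectations of functions of the \emph{terminal} positions. Defining the $\Gamma$'s as whatever remains makes the identity tautological. To reach the stated forms, every time integral must be converted. Set $R_{2\ep}(x)=\sum_n p(2\ep,x-2n)$ and $r_{2\ep}(x)=\int_0^x(R_{2\ep}(y)-\tfrac12)\,dy$; then $\partial_x p^{\mrm{neu}}(2\ep,x;y)=r_{2\ep}''(x-y)+r_{2\ep}''(x+y)$. Since $X^1-X^2$ has quadratic variation $2\,ds$,
\[ \int_0^t r_{2\ep}''(X^1_s-X^2_s)\,ds = r_{2\ep}(X^1_t-X^2_t)-r_{2\ep}(x_1-x_2)-\int_0^t \bigl(R_{2\ep}-\tfrac12\bigr)(X^1_s-X^2_s)\,d(X^1_s-X^2_s). \]
The same applies to $X^1+X^2$, to the one-replica term, and to $V_\ep'$ (through $r_\ep(x)\mp r_\ep(x+1)$). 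This sorts the pieces as follows:
\begin{itemize}
\item the starting-point values give the main $\zeta_{2\ep}$ term;
\item the terminal values give the $\Gamma$'s;
\item what remains are $\int_0^t R_{2\ep}(X^1_s\mp X^2_s)\,d(X^1_s\mp X^2_s)$, $\int_0^t R_\ep(X_s)\,dX_s$ and $\int_0^t R_\ep(X_s+1)\,dX_s$.
\end{itemize}
These remaining integrals must go into $\mc{E}$, and this is the real content of the proof. Under the polymer measure they are not martingales. Already under Wiener measure their $L^2$ norm grows like $\ep^{-1/4}$. So no ``Gaussian tail bound on $p^{\mrm{neu}}$'' makes their polymer expectations small. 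Their vanishing is exactly Proposition \ref{prop:key symmetry} and the remark after it. That bound has a constant depending on $\kappa$, so your ``uniformly in $\kappa$'' is also unsupported. The cancellation of $\Gamma^{\mrm{noise}}+\Gamma^{\mrm{bdy}}+\Gamma^{\mrm{init}}$, which you attribute to that proposition, is not part of this proposition. It happens afterwards, in the limit $\ep,\kappa\to0$, via elementary identities such as $\tfrac12\zeta(x)+\tfrac12\sigma(x)=\zeta(\tfrac12x)$ and an integration by parts in $x_2$.

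Two bookkeeping steps are also wrong. The first concerns the constant $\alpha$. Since $h_0^\kappa$ is $\varrho$-invariant, on the unfolded line $u_0^\kappa(X_t)=\sigma(X_t)\,u_0^\kappa(\varrho(X_t))$. Also, the relevant smoothing of $u_0$ is $e^{\kappa\Delta_{\mrm{dir}}}$, because $\partial_x e^{\kappa\Delta_{\mrm{neu}}}=e^{\kappa\Delta_{\mrm{dir}}}\partial_x$; so your appeal to the Neumann semigroup preserving constants does not apply. Hence the mean of the initial data contributes $\alpha\,\bb{E}^{\mrm{poly},\theta}_{t,x}\sigma(X_t)$, which is the last line of \eqref{eqn:contribution from initial data}, and not the constant $\alpha$. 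The term $-\alpha\,\bb{E}F(Y)\int_0^1 f(x)\,dx$ coming from $\tilde u^\theta_t$ is instead matched by the starting-point term $+\alpha\,\bb{E}F(Y)\int_0^1 f\sigma_\ep$. That term arises from applying It\^o's formula to the $\alpha$-part of $V_\ep'=-\alpha\bigl(r_\ep''(\cdot)-r_\ep''(\cdot+1)\bigr)+\tfrac12\bigl(r_\ep''(\cdot)+r_\ep''(\cdot+1)\bigr)$, which is where $\alpha+\beta=0$ is used. The remainder $\alpha\,\bb{E}F(Y)\int_0^1 f(\sigma_\ep-1)$ is what goes into $\mc{E}$.

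The second concerns $C_\ep'$: $-\tfrac12C_\ep'$ is not a negligible near-boundary correction to be merged into $\Gamma^{\mrm{bdy}}$. Since $\tfrac12C_\ep'(x)=R_{2\ep}'(2x)=\partial_x p^{\mrm{neu}}(2\ep,x;y)|_{y=x}$, it exactly cancels the $O(1)$ self-interaction term created when the Malliavin derivative hits $e^{-H^\theta}$.
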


    The proof of Proposition \ref{prop:auxiliary result} is a series of relatively straightforward calculations and is deferred to the next section. We now complete the proof of Theorem \ref{thm:invariance}, given Proposition \ref{prop:auxiliary result}. By taking the smoothing parameters $\ep,\kappa \to 0$ in \eqref{eqn:contribution from noise}-\eqref{eqn:contribution from initial data} we get a series of cancellations and $\Gamma^{\mrm{noise}} + \Gamma^{\mrm{bdy}} + \Gamma^{\mrm{init}}$ becomes
    \begin{equation}\label{eqn:total correction term, limit}
        \begin{split}
            -&\bb{E} F'(\inn{\tilde{u}_t,f}) \int_{[0,1]^2} dx_1 dx_2 \ f(x_1) f'(x_2) \\
            &\hspace{96pt}\cdot \bb{E}^{\mrm{poly}}_{t,x_1,x_2}\left[\tfrac{1}{2}\zeta(\tfrac{1}{2}(X^1_t-X^2_t)) + \tfrac{1}{2}\zeta(\tfrac{1}{2}(X^1_t+X^2_t)) + \sigma(X^1_t) \mbf{1}_{[0,\varrho(X^2_t)]}(\varrho(X^1_t))\right] \\
            &\quad\quad -\bb{E} F(\inn{\tilde{u}_t,f}) \int_{0}^{1} dx \ f(x) \bb{E}^{\mrm{poly}}_{t,x,x}\left[\tfrac{1}{2}\zeta(\tfrac{1}{2}(X_t-\tilde{X}_t)) + \tfrac{1}{2}\zeta(\tfrac{1}{2}(X_t+\tilde{X}_t)) + \sigma(X_t) \mbf{1}_{[0,\varrho(\tilde{X}_t)]}(\varrho(X_t))\right] \\
            &\quad\quad + \bb{E} F(\inn{\tilde{u}_t,f}) \int_{0}^{1} dx \ f(x) \bb{E}^{\mrm{poly}}_{t,x}\left[\tfrac{1}{2}\zeta(X_t) + \tfrac{1}{2}\sigma(X_t)\right],
        \end{split}
    \end{equation}
    where $\tilde{u}_t$ is defined as in Section \ref{sec:introduction, subsec:method}, i.e. the solution to \altref{eqn:SBE}{SBE$_{\alpha,-\alpha}$} at time $t$ translated vertically by a factor of $-\alpha$. Next, it is straightforward to check that (almost everywhere) we have the identities
    \begin{align*}
        \left(\tfrac{1}{2}\zeta(\tfrac{1}{2}(x_1-x_2)) + \tfrac{1}{2}\zeta(\tfrac{1}{2}(x_1+x_2))\right) + \sigma(x_1) \mbf{1}_{[0,\varrho(x_2)]}(\varrho(x_1)) = \zeta(\tfrac{1}{2}x_1), \quad \tfrac{1}{2}\zeta(x) + \tfrac{1}{2}\sigma(x) = \zeta(\tfrac{1}{2}x).
    \end{align*}
    By making these substitutions, the second and third terms in \eqref{eqn:total correction term, limit} cancel exactly, and the first is killed by integrating by parts in $x_2$, so that in the limit $\ep,\kappa \to 0$ we have $\Gamma^{\mrm{noise}} + \Gamma^{\mrm{bdy}} + \Gamma^{\mrm{init}} = 0$. On the other hand, for the first term on the right-hand side of \eqref{eqn:steins for approximating field}, we note that $\frac{1}{2}\zeta(\frac{1}{2}\cdot)$ agrees with $\frac{1}{2}(\sgn(x)-x)$ identically on $(-2,2)$, so that in the limit $\ep \to 0$ we can make the replacement
    \begin{align*}
        \tfrac{1}{2}\zeta(\tfrac{1}{2}(x_1-x_2)) + \tfrac{1}{2}\zeta(\tfrac{1}{2}(x_1+x_2)) \to \tfrac{1}{2}\sgn(x_1-x_2) + \tfrac{1}{2}\sgn(x_1+x_2) - x_1.
    \end{align*}
    Integrating by parts in $x_2$, we then obtain
    \begin{align*}
        \bb{E} F(\inn{\tilde{u}_t,f}) \inn{\tilde{u}_t,f} &= \norm{f}_{L^2([0,1])}^2 \bb{E} F'(\inn{\tilde{u}_t,f}),
    \end{align*}
    By Stein's equation \eqref{eqn:steins}, Theorem \ref{thm:invariance} follows.

    \section{Proof of Proposition \ref{prop:auxiliary result}}\label{sec:proof of auxiliary result}

    By distributing the polymer expectation in \eqref{eqn:approximation to u, polymer form} onto each of its three arguments, we divide the proof into three separate (and relatively short) calculations: a contribution from the noise,
    \begin{equation}\label{eqn:calculation, noise part}
        \bb{E} F(Y) \int_{0}^{1} dx \ f(x) \bb{E}^{\mrm{poly},\theta}_{t,x}\left[\int_{0}^{t} \partial_x \xi_{\ep}(t-s,X_s) - \tfrac{1}{2}C_{\ep}'(X_s) ds\right],
    \end{equation}
    a contribution from the boundary data,
    \begin{equation}\label{eqn:calculation, boundary part}
        \bb{E} F(Y) \int_{0}^{1} dx \ f(x) \bb{E}^{\mrm{poly},\theta}_{t,x}\left[\int_{0}^{t} V_{\ep}'(X_s) ds\right],
    \end{equation}
    and, finally, a contribution from the initial data,
    \begin{equation}\label{eqn:calculation, initial data part}
        \bb{E} F(Y) \int_{0}^{1} dx \ f(x) \bb{E}^{\mrm{poly},\theta}_{t,x} u_0^{\kappa}(X_t).
    \end{equation}
    With this, we have the identity
    \begin{equation}\label{eqn:lhs of steins}
        \bb{E} F(Y) Y = \eqref{eqn:calculation, noise part} + \eqref{eqn:calculation, boundary part} + \eqref{eqn:calculation, initial data part} - \alpha \bb{E}F(Y) \int_{0}^{1} f(x) dx.
    \end{equation}
    The calculations of \eqref{eqn:calculation, noise part} and \eqref{eqn:calculation, boundary part} will hinge on the following non-trivial key symmetry, which we prove in Section \ref{sec:key symmetry}:

    \begin{proposition}\label{prop:key symmetry}
        Let $R_{\ep} : \R \to \R$ denote the 2-periodic function $x \mapsto \sum_{n \in \Z} p(\ep,x-2n)$ for $p(t,x) = (2\pi t)^{-1/2}e^{-x^2/2t}$ given by the standard heat kernel. There is a finite constant $C = C(t,\kappa) > 0$ such that
        \begin{align*}
            \mbf{E}\left|\bb{E}^{\mrm{poly},\theta}_{t,x_1,x_2} \int_{0}^{t} R_{\ep}(X^1_s - X^2_s) d(X^1_s - X^2_s)\right| \leq C \ep^{1/4} |\log \ep|^2.
        \end{align*}
        In particular, as $\ep \to 0$ (and $t$ and $\kappa$ are kept fixed) this quantity vanishes in $L^1(\Omega^{\mrm{noise}})$ uniformly in $x_1$ and $x_2$.
    \end{proposition}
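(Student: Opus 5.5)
The plan is to remove the stochastic integral by Gaussian integration by parts on Wiener space, and then to exploit the symmetry between the two paths at the times when they (nearly) coincide. Write the two-path polymer expectation as a ratio,
\begin{align*}
\bb{E}^{\mrm{poly},\theta}_{t,x_1,x_2}\Big[\int_0^t R_{\ep}(X^1_s-X^2_s)\, d(X^1_s-X^2_s)\Big] = \frac{\bb{E}^{\mrm{BM}}_{x_1,x_2}\big[e^{-H^{\theta}(t,B^1)-H^{\theta}(t,B^2)} \int_0^t R_{\ep}(B^1_s-B^2_s)\, d(B^1_s-B^2_s)\big]}{Z^{\theta}(t,x_1)Z^{\theta}(t,x_2)}.
\end{align*}
Here $B^1,B^2$ are independent Brownian motions, and the integral is an It\^o integral under Wiener measure. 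The integrand $R_{\ep}(B^1_s-B^2_s)$ is adapted, and $e^{-H^{\theta}}$ is a smooth functional of the path because $\xi_{\ep}$, $C_{\ep}$, $V_{\ep}$ and $h_0^{\kappa}$ are smooth. So I would apply Proposition \ref{prop:malliavin integration by parts}, now with respect to the driving Brownian motions rather than $\xi$ (this is Gaussian integration by parts on Wiener space). For $s<t$ the Malliavin derivative of $-H^{\theta}(t,B^i)$ in direction $dB^i_s$ is
\begin{align*}
A^i_s := \int_s^t \big(\partial_x\xi_{\ep}(t-r,B^i_r) - \tfrac12 C_{\ep}'(B^i_r) + V_{\ep}'(B^i_r)\big)\,dr + u_0^{\kappa}(B^i_t).
\end{align*}
This is exactly the same object that appears in \eqref{eqn:approximation to u, polymer form}, evaluated on the time window $[s,t]$. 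The quantity in the statement therefore becomes
\begin{align*}
\bb{E}^{\mrm{poly},\theta}_{t,x_1,x_2}\Big[\int_0^t R_{\ep}(X^1_s-X^2_s)\,\big(A^1_s-A^2_s\big)\,ds\Big].
\end{align*}

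The factor $R_{\ep}$ localizes to times where $X^1_s-X^2_s$ lies within $O(\ep^{1/2})$ of $2\Z$. By 2-periodicity and $\varrho$-invariance of everything in $H^{\theta}$, those are exactly the times at which the two reflected paths sit at the same point of $[0,1]$. At such a time the two path-futures see the same environment. Heuristically, $A^1_s - A^2_s$ is then a difference of two nearly identically distributed quantities, which is where the symmetry and the smallness come from. I would make this quantitative in three steps.

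\begin{enumerate}
\item Bound the polymer occupation measure of $\{|X^1_s-X^2_s - 2n|\lesssim \ep^{1/2}\log\ep\}$. Use Cauchy--Schwarz against Wiener measure, with negative moments of $Z^{\theta}$ that are uniform in $\ep$ (these depend on $t,\kappa$), to get $\int_0^t R_{\ep}(X^1_s-X^2_s)\,ds = O(|\log\ep|)$ in $L^p$. Outside the window $|X^1_s-X^2_s-2n|\lesssim \ep^{1/2}|\log\ep|$, the Gaussian tails of $R_{\ep}$ make the contribution negligible.
\item Handle the pieces of $A^i_s$ other than the noise. The boundary term $\int_s^t V_{\ep}'(X^i_r)\,dr$ and the $C_{\ep}'$ term are derivatives of $\varrho$-invariant kernels. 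They can be rewritten as increments of primitives plus It\^o corrections and controlled by local time estimates. The term $u_0^{\kappa}(X^i_t)$ is bounded by a constant depending on $\kappa$.
\item Handle the noise term, which is the main obstacle. $\partial_x\xi_{\ep}$ has pointwise size $\ep^{-3/4}$, so a naive bound fails. My first attempt would be to pass the $\mbf{E}$-expectation inside: apply Proposition \ref{prop:malliavin integration by parts} once more, this time in $\xi$, to $\int_s^t\partial_x\xi_{\ep}(t-r,X^i_r)\,dr$ multiplied by the polymer weights. This converts it into $\ep$-regularized overlap integrals $\int_s^t \partial_x p^{\mrm{neu}}(2\ep, X^i_r; X^j_r)\,dr$ between replicas. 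When $X^1_s\approx X^2_s$ these overlap terms for $i=1$ and $i=2$ nearly cancel. The residual is governed by the H\"older-$\tfrac12$ modulus of $r\mapsto X^1_r-X^2_r$ at scale $\ep^{1/2}$, which should give the factor $\ep^{1/4}$, with a further $|\log\ep|$ from the occupation bound in step 1.
\end{enumerate}

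If the cancellation in step 3 proves hard to organize, the fallback is to use the exchange $X^1\leftrightarrow X^2$ after coupling the two futures from the first near-collision time (a strong Markov argument for the polymer). Under this coupling the integrand $R_{\ep}(D)\,dD$ changes sign, so only the pre-coupling error, of size $\ep^{1/4}|\log\ep|^2$, survives. All constants depend only on $t$ and $\kappa$, giving the bound uniformly in $x_1,x_2$.
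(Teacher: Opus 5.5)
There is a genuine gap: nothing in your main line produces a small factor.

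\textbf{The main line.} The Wiener-space integration by parts is a correct identity. However, all it does is replace the It\^o integral by its drift part, since $\bb{E}^{\mrm{poly},\theta}[A^i_s \mid \mathcal{F}_s] = u^{\theta}(t-s,X^i_s)$. Over the whole window $[0,t]$ neither factor is then small. With $D = X^1 - X^2$, the weight $\int_0^t R_{\ep}(D_s)\,ds$ is of order one, since it approximates the local time of $D$ at $2\Z$; your step 1 gives only $O(|\log\ep|)$. Near a collision, $u^{\theta}(t-s,X^1_s) - u^{\theta}(t-s,X^2_s)$ is a difference of a field of size $\ep^{-1/4}$, regularized at scale $\ep^{1/2}$, evaluated at two points $O(\ep^{1/2})$ apart modulo $2$. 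So it is typically itself of size $\ep^{-1/4}$, and there is no H\"older-modulus gain. What removes the post-collision contribution is exact exchangeability of the two paths, which gives exactly zero.

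Step 3 also fails on its own terms, for two reasons. First, integrating by parts in $\xi$ computes $\mbf{E}[\,\cdot\,]$, whereas the statement, and its later use against $F(Y)$ and $F'(Y)$, requires $\mbf{E}|\cdot|$. Second, the overlap terms do not cancel. Since $\partial_x p^{\mrm{neu}}(2\ep,x;y) = R_{2\ep}'(x-y) + R_{2\ep}'(x+y)$ with $R_{2\ep}'$ odd, the cross-replica terms in $A^1_s - A^2_s$ add up to $2\int_s^t R_{2\ep}'(D_r)\,dr$. Evaluating this by It\^o's formula brings back $\int_s^t R_{2\ep}(D_r)\,dD_r$, which is the very object being estimated.

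\textbf{The fallback.} Your fallback names the right first reduction, and it is exactly the paper's. After $\tau = \inf\{s : D_s \in 2\Z\}\wedge t$ the two polymer paths are exchangeable, by the strong Markov property and 2-periodicity, while the integrand is antisymmetric. So the upper limit can be replaced by $\tau$. This needs an exact hitting time of $2\Z$: after a mere near-collision the futures start at different points and are not exchangeable.

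You then only assert that the pre-$\tau$ piece is $O(\ep^{1/4}|\log\ep|^2)$, and that is the entire content of the proof. It is not automatic. Under Wiener measure, $\int_0^{\tau}R_{\ep}(D_s)\,dD_s$ still has second moment of order one, so the smallness must come from the polymer average, and an $L^1(\Omega^{\mrm{noise}})$ bound calls for a second-moment argument. The paper proceeds as follows:
\begin{enumerate}
\item It separates numerator and denominator using negative moments of $Z^{\theta}$ that are bounded uniformly in $\ep$.
\item It writes $\mbf{E}\mc{X}^2$, via the Gaussian moment generating function, as a four-replica Brownian expectation with pairwise potential $\bm{R}_{\ep}$.
\item It applies Girsanov with drift $\bm{U} = \nabla\log\bm{Z}$, where $|\bm{U}|\lesssim|\log\ep|$.
\item The martingale parts $M_1, M_2$ are driven by independent Brownian motions, so their cross term vanishes.
\item The drift parts are small because the pre-collision occupation satisfies $\bb{E}^{\mrm{BM}}_{x_1,x_2}(\int_0^{\tau}R_{\ep}(D_s)\,ds)^4 \lesssim \ep^2|\log\ep|^4$.
\end{enumerate}

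Your Wiener-space identity could instead be salvaged by restricting it to $[0,\tau]$. That would reduce matters to bounding $\bb{E}^{\mrm{poly},\theta}\int_0^{\tau}R_{\ep}(D_s)\,|u^{\theta}(t-s,X^1_s)-u^{\theta}(t-s,X^2_s)|\,ds$. You would then need two estimates, neither of which is in your proposal: an $\ep^{1/2}$ pre-collision occupation bound under the polymer measure, and a quenched bound on $\sup|u^{\theta}|$ of order $\ep^{-1/4}$ up to logarithms, with moments in $\mbf{E}$.
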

    \begin{remark}
        We shall see in the following calculations that terms of the form $\bb{E}^{\mrm{poly,\theta}}_{t,x_1,x_2} \int_{0}^{t} R_{\ep}(X^1_s+X^2_s) d(X^1_s + X^2_s)$ and $\bb{E}^{\mrm{poly,\theta}}_{t,x} \int_{0}^{t} R_{\ep}(X_s) dX_s$ arise as well, that we also claim vanish as $\ep \to 0$. The is easily seen for the former by making the change of variables $X^2 \mapsto \tilde{X}^2 := -X^2$, which reduces it to the case of Proposition \ref{prop:key symmetry} above. For the latter, there is no direct way to make a similar reduction, however it is not difficult to see that every step in the proof of Proposition \ref{prop:key symmetry} applies in this case as well (in fact, most are even simplified a bit).
    \end{remark}
    
    \subsection{Contribution from the noise}\label{sec:proof of auxiliary result, subsec:noise contribution}
    We will begin the calculation of \eqref{eqn:calculation, noise part} with $\mbf{E}$ in place of $\bb{E}$ (for a fixed realization of the initial data); at the end, we will take expectations $\bb{E}^{\mrm{init}}$ with respect to the initial data to recover $\bb{E}$. With this in mind, \eqref{eqn:calculation, noise part} breaks into two components,
    \begin{equation}\label{eqn:calculation, constituent terms in noise contribution}
        \mbf{E} F(Y) \int_{0}^{1} dx \ f(x) \bb{E}^{\mrm{poly},\theta}_{t,x} \int_{0}^{t} \partial_x \xi_{\ep}(t-s,X_s) ds - \tfrac{1}{2}\mbf{E} F(Y) \int_{0}^{1} dx \ f(x) \bb{E}^{\mrm{poly},\theta}_{t,x} \int_{0}^{t} C_{\ep}'(X_s) ds.
    \end{equation}
    By expanding the polymer expectation and applying the integration by parts identity Proposition \ref{prop:malliavin integration by parts}, the first term here becomes
    \begin{align*}
        \mbf{E} F(Y) \int_{0}^{1} dx \ f(x) &\bb{E}^{\mrm{BM}}_x \tfrac{e^{-H^{\theta}(t,B)}}{Z^{\theta}(t,x)} \int_{0}^{t} ds \int_{0}^{1} dy \ \partial_x p^{\mrm{neu}}(\ep,y; B_{t-s}) \xi(s,y) \\
        &= \mbf{E} \int_{0}^{1} dx \ f(x) \bb{E}^{\mrm{BM}}_x \int_{0}^{t} ds \int_{0}^{1} dy \ \partial_x p^{\mrm{neu}}(\ep,y; B_{t-s}) \ms{D}_{s,y}\left(F(Y)\tfrac{e^{-H^{\theta}(t,B)}}{Z^{\theta}(t,x)}\right).
    \end{align*}
    To compute the Malliavian derivative, we apply product rule together with the easily checked identities
    \begin{align*}
        \ms{D}_{s,y} Y = -\int_{0}^{1} dx_2 \ f'(x_2) \tfrac{\ms{D}_{s,y} Z^{\theta}(t,x_2)}{Z^{\theta}(t,x_2)}, \quad \ms{D}_{s,y}(-H^{\theta}(t,B)) = p^{\mrm{neu}}(\ep, y; B_{t-s})
    \end{align*}
    yielding
    \begin{align*}
        \ms{D}_{s,y}\left(F(Y)\tfrac{e^{-H^{\theta}(t,B)}}{Z^{\theta}(t,x)}\right) &= -F'(Y)\tfrac{e^{-H^{\theta}(t,B)}}{Z^{\theta}(t,x)} \int_{0}^{1} dx_2 \ f'(x_2) \bb{E}^{\mrm{poly},\theta}_{t,x_2} p^{\mrm{neu}}(\ep,y; B^2_{t-s}) \\
        &\quad\quad + F(Y)\tfrac{e^{-H^{\theta}(t,B)}}{Z^{\theta}(t,x)}\left(p^{\mrm{neu}}(\ep,y; B_{t-s}) - \bb{E}^{\mrm{poly},\theta}_{t,x} p^{\mrm{neu}}(\ep, y; \tilde{B}_{t-s})\right).
    \end{align*}
    By now substituting back in, performing the integrations in $y$ via Chapman-Kolmogorov, and changing variables $t-s \mapsto s$, we obtain:
    \begin{align*}
        \mbf{E} &F(Y) \int_{0}^{1} dx \ f(x) \bb{E}^{\mrm{poly,\theta}}_{t,x} \int_{0}^{t} ds \ \partial_x \xi_{\ep}(t-s,X_s) \\
        &= -\mbf{E} F'(Y) \int_{[0,1]^2} dx_1 dx_2 \ f(x_1) f'(x_2) \bb{E}^{\mrm{poly},\theta}_{t,x_1,x_2} \int_{0}^{t} ds \ \partial_x p^{\mrm{neu}}(2\ep, X^1_s; X^2_s) \\
        &\quad\quad + \mbf{E} F(Y) \int_{0}^{1} dx \ f(x) \bb{E}^{\mrm{poly},\theta}_{t,x}\int_{0}^{t} ds \ \left(\partial_x p^{\mrm{neu}}(2\ep, X_s; X_s) - \bb{E}^{\mrm{poly},\theta}_{t,x} \partial_x p^{\mrm{neu}}(2\ep, X_s; \tilde{X}_s)\right).
    \end{align*}
    On the other hand, the second term in \eqref{eqn:calculation, constituent terms in noise contribution} is easily computed as
    \begin{align*}
        -\mbf{E} F(Y) \int_{0}^{1} dx \ f(x) \bb{E}^{\mrm{poly},\theta}_{t,x}\int_{0}^{t} ds \ \partial_x p^{\mrm{neu}}(2\ep, X_s; X_s),
    \end{align*}
    so the total contribution from the noise \eqref{eqn:calculation, noise part} becomes:
    \begin{align*}
        -\mbf{E} F'(Y) \int_{0}^{1} dx_1 \int_{0}^{1} dx_2 \ &f(x_1) f'(x_2) \bb{E}^{\mrm{poly}}_{t,x_1,x_2} \int_{0}^{t} ds \ \partial_x p^{\mrm{neu}}(2\ep, X^1_s; X^2_s) \\
        &- \mbf{E} F(Y) \int_{0}^{1} dx \ f(x) \bb{E}^{\mrm{poly}}_{t,x,x} \int_{0}^{t} ds \ \partial_x p^{\mrm{neu}}(2\ep, X_s; \tilde{X}_s).
    \end{align*}
    We now compute the integrations in time via It\^{o}'s formula. To do so, we note that if $R_t(x) := \sum_{n \in \Z} p(t,x-2n)$ and $r_t(x) := \int_{0}^{x} R_t(y) - \frac{1}{2} dy$ (the $\frac{1}{2}$ here is just to ensure that $r$ inherits the 2-periodicity of $R$, which will be computationally convenient for us), then from the expansion \eqref{eqn:neumann heat kernel} it follows that
    \begin{align*}
        \partial_x p^{\mrm{neu}}(t,x;y) = R_t'(x-y) + R_t'(x+y) = r_t''(x-y) + r_t''(x+y).
    \end{align*}
    Subsequently, by It\^{o}'s formula we have
    \begin{align*}
        \int_{0}^{t} \partial_x p^{\mrm{neu}}(2\ep, X^1_s; X^2_s) ds &= r_{2\ep}(X^1_t - X^2_t) - r_{2\ep}(x_1-x_2) - \int_{0}^{t} \left(R_{2\ep}(X^1_s - X^2_s) - \tfrac{1}{2}\right) d(X^1_s - X^2_s) \\
        &\quad\quad + r_{2\ep}(X^1_t + X^2_t) - r_{2\ep}(x_1+x_2) - \int_{0}^{t} \left(R_{2\ep}(X^1_s + X^2_s) - \tfrac{1}{2}\right) d(X^1_s + X^2_s) \\
        &= \left(r_{2\ep}(X^1_t - X^2_t) + r_{2\ep}(X^1_t + X^2_t)\right) - \left(r_{2\ep}(x_1-x_2) + r_{2\ep}(x_1+x_2)\right) + X^1_t + \cdots
    \end{align*}
    In the final line here, we have chosen to omit the stochastic integral terms because they are immaterial in the limit $\ep \to 0$ by Proposition \ref{prop:key symmetry}. Similarly, we have 
    \begin{align*}
        \int_{0}^{t} \partial_x p^{\mrm{neu}}(2\ep, X_s; \tilde{X}_s) ds &= \left(r_{2\ep}(X_t - \tilde{X}_t) + r_{2\ep}(X_t + \tilde{X}_t)\right) - r_{2\ep}(2x) + X_t + \cdots
    \end{align*}
    Finally, it is easily checked that that $r_t \to \frac{1}{2}\zeta(\frac{1}{2}\cdot)$ as $t \to 0$ (where $\zeta$ is defined as in the first paragraph in Section \ref{sec:main results}); for this reason, we write $r_{2\ep} = \frac{1}{2}\zeta_{2\ep}(\frac{1}{2}\cdot)$ and take expectations $\bb{E}^{\mrm{init}}$ with respect to the initial data to conclude finally that
    \begin{align*}
        \eqref{eqn:calculation, noise part} &= \bb{E} F'(Y) \int_{[0,1]^2} dx_1 dx_2 \ f(x_1) f'(x_2)\left(\tfrac{1}{2}\zeta_{2\ep}(\tfrac{1}{2}(x_1-x_2)) + \tfrac{1}{2}\zeta_{2\ep}(\tfrac{1}{2}(x_1+x_2))\right) + \Gamma^{\mrm{noise}} + \mc{E}^{\mrm{noise}},
    \end{align*}
    where $\mc{E}^{\mrm{noise}} \to 0$ as $\ep \to 0$ and
    \begin{equation}\label{eqn:contribution from noise}
        \begin{split}
            \Gamma^{\mrm{noise}} &= -\bb{E} F'(Y) \int_{[0,1]^2} dx_1 dx_2 \ f(x_1) f'(x_2) \bb{E}^{\mrm{poly},\theta}_{t,x_1,x_2}\left[\tfrac{1}{2}\zeta_{2\ep}(\tfrac{1}{2}(X^1_t-X^2_t)) + \tfrac{1}{2}\zeta_{2\ep}(\tfrac{1}{2}(X^1_t+X^2_t))\right] \\
            &\quad\quad -\bb{E} F(Y) \int_{0}^{1} dx \ f(x) \bb{E}^{\mrm{poly},\theta}_{t,x,x}\left[\tfrac{1}{2}\zeta_{2\ep}(\tfrac{1}{2}(X_t-\tilde{X}_t)) + \tfrac{1}{2}\zeta_{2\ep}(\tfrac{1}{2}(X_t+\tilde{X}_t))\right] \\
            &\quad\quad - \bb{E} F(Y) \int_{0}^{1} dx \ f(x) \bb{E}^{\mrm{poly},\theta}_{t,x} X_t + \bb{E} F(Y) \int_{0}^{1} dx \ f(x) \tfrac{1}{2}\zeta_{2\ep}(x).
        \end{split}
    \end{equation}

    \subsection{Contribution from the boundary data}\label{sec:proof of auxiliary result, subsec:boundary contribution}
    Using the fact that $\alpha + \beta = 0$, one easily computes
    \begin{align*}
        \tfrac{\hat{\alpha}}{2}\partial_x p^{\mrm{neu}}(\ep,X_s;0) + \tfrac{\hat{\beta}}{2}\partial_x p^{\mrm{neu}}(\ep,X_s;1) &= \alpha\left(R_{\ep}'(X_s) - R_{\ep}'(X_s+1)\right) - \tfrac{1}{2}\left(R_{\ep}'(X_s) + R_{\ep}'(X_s+1)\right)
    \end{align*}
    and hence
    \begin{align*}
        \int_{0}^{t} V_{\ep}'(X_s) ds = -\alpha\int_{0}^{t} \left(r_{\ep}''(X_s) - r_{\ep}''(X_s+1)\right) ds + \tfrac{1}{2}\int_{0}^{t} \left(r_{\ep}''(X_s) + r_{\ep}''(X_s+1)\right) ds,
    \end{align*}
    where the functions $R$ and $r$ are defined as in the previous subsection. Since $r_{\ep}(x) - r_{\ep}(x+1)$ and $r_{\ep}(x) + r_{\ep}(x+1)$ are easily checked to define smooth approximations of $\frac{1}{2}\sigma(x)$ and $\frac{1}{2}\zeta(x)$, respectively, we denote them by $\tfrac{1}{2}\sigma_{\ep}(x)$ and $\tfrac{1}{2}\zeta_{\ep}(x)$ and apply It\^{o}'s formula to deduce that
    \begin{align*}
        \int_{0}^{t} V_{\ep}'(X_s) ds &= -\alpha\left(\sigma_{\ep}(X_t) - \sigma_{\ep}(x) - 2\int_{0}^{t} \left(R_{\ep}(X_s) - R_{\ep}(X_s+1)\right) dX_s\right) \\
        &\quad\quad + \left(\tfrac{1}{2}\zeta_{\ep}(X_t) - \tfrac{1}{2}\zeta_{\ep}(x) - \int_{0}^{t} \left(R_{\ep}(X_s) + R_{\ep}(X_s+1)\right) dX_s + X_t\right).
    \end{align*}
    Since the stochastic integration terms are immaterial in the limit $\ep \to 0$ by the remarks following Proposition \ref{prop:key symmetry}, the contribution \eqref{eqn:calculation, boundary part} from the boundary data is then $\alpha \bb{E} F(Y) \int_{0}^{1} f(x) \sigma_{\ep}(x) dx + \Gamma^{\mrm{bdy}} + \mc{E}^{\mrm{bdy}}$, where $\mc{E}^{\mrm{bdy}} \to 0$ as $\ep \to 0$ and 
    \begin{equation}\label{eqn:contribution from boundary data}
        \begin{split}
            \Gamma^{\mrm{bdy}} &= -\alpha\bb{E} F(Y) \int_{0}^{1} dx \ f(x) \bb{E}^{\mrm{poly},\theta}_{t,x} \sigma_{\ep}(X_t) + \bb{E} F(Y) \int_{0}^{1} dx \ f(x) \bb{E}^{\mrm{poly},\theta}_{t,x} \tfrac{1}{2}\zeta_{\ep}(X_t) \\
            &\quad\quad + \bb{E} F(Y) \int_{0}^{1} dx \ f(x) \bb{E}^{\mrm{poly},\theta}_{t,x} X_t - \bb{E} F(Y) \int_{0}^{1} dx \ f(x) \tfrac{1}{2}\zeta_{\ep}(x).
        \end{split}
    \end{equation}

    \subsection{Contribution from the initial data}\label{sec:proof of auxiliary result, subsec:initial data contribution}
    Again, we first compute \eqref{eqn:calculation, initial data part} with $\bb{E}^{\mrm{init}}$ in place of $\bb{E}$ for a fixed realization of the forcing noise $\xi$; at the end, we will take an expectation $\mbf{E}$ with respect to the noise to recover $\bb{E}$. Since $u_0^{\kappa}$ is the derivative of a $\varrho$-invariant function, and $\varrho$ is differentiable with $\varrho' = \sigma$ everywhere except the integers (where the diffusion $X_t$ is not located with probability one), we can write
    \begin{align*}
        \eqref{eqn:calculation, initial data part} &= \bb{E}^{\mrm{init}} F(Y) \int_{0}^{1} dx \ f(x) \bb{E}^{\mrm{poly},\theta}_{t,x} \sigma(X_t) u_0^{\kappa}(\varrho(X_t)) = \bb{E}^{\mrm{init}} F(Y) \int_{0}^{1} dx \ f(x) \bb{E}^{\mrm{poly},\theta}_{t,x} \sigma(X_t) \left(\tilde{\xi}_{\kappa}(\varrho(X_t)) + \alpha\right),
    \end{align*}
    where (by elementary properties of solutions to the heat equation) $\tilde{\xi}_{\kappa} = e^{\kappa\Delta_{\mrm{dir}}} \tilde{\xi}$ for a spatial white noise $\tilde{\xi}$ that is independent of the forcing noise $\xi$, and where $e^{t\Delta_{\mrm{dir}}}$ denotes the heat semigroup generated by the Laplacian on $[0,1]$ with homogeneous Dirichlet boundary conditions. The main term of interest here is of course the term obtained by distributing onto the $\tilde{\xi}_{\kappa}(\varrho(X_t))$, which by expanding the polymer expectation and applying the integration by parts identity Proposition \eqref{prop:malliavin integration by parts} evaluates to
    \begin{align*}
        \bb{E}^{\mrm{init}} \int_{0}^{1} dx \ f(x) \bb{E}^{\mrm{BM}}_{x} \sigma(B_t) \int_{0}^{1} dy \ p^{\mrm{dir}}(\kappa,y; \varrho(B_t)) \tilde{\ms{D}}_y\left(F(Y) \tfrac{e^{-H^{\theta}(t,B)}}{Z^{\theta}(t,x)}\right),
    \end{align*}
    where $p^{\mrm{dir}}(t,x;y) = (e^{t\Delta_{\mrm{dir}}} \delta_y)(x)$ denotes the Dirichlet heat kernel. Exactly as in the calculation of the noise contribution \eqref{eqn:calculation, noise part}, one computes
    \begin{align*}
        \tilde{\ms{D}}_y\left(F(Y) \tfrac{e^{-H^{\theta}}(t,B)}{Z^{\theta}(t,x)}\right) &= -\tfrac{e^{-H^{\theta}}(t,B)}{Z^{\theta}(t,x)} F'(Y) \int_{0}^{1} dx_2 \ f'(x_2) \bb{E}^{\mrm{poly},\theta}_{t,x_2} \mbf{1}_{[0,\varrho(X^2_t)]}(y) \\
        &\quad\quad + \tfrac{e^{-H^{\theta}}(t,B)}{Z^{\theta}(t,x)} F(Y)\left(\mbf{1}_{[0,\varrho(B_t)]}(y) - \bb{E}^{\mrm{poly},\theta}_{t,x} \mbf{1}_{[0,\varrho(\tilde{X}_t)]}(y)\right),
    \end{align*}
    and so by substituting back and taking an expectation $\mbf{E}$ over the noise, we find that the contribution \eqref{eqn:calculation, initial data part} from the initial data is $\Gamma^{\mrm{init}}$, where
    \begin{equation}\label{eqn:contribution from initial data}
        \begin{split}
            \Gamma^{\mrm{init}} &= -\bb{E} F'(Y) \int_{[0,1]^2} dx_1 dx_2 \ f(x_1)f'(x_2) \bb{E}^{\mrm{poly},\theta}_{t,x_1,x_2} \sigma(X^1_t) \int_{0}^{1} dy \ p^{\mrm{dir}}(\kappa,y; \varrho(X^1_t)) \mbf{1}_{[0,\varrho(X^2_t)]}(y) \\
            &\quad\quad + \bb{E} F(Y) \int_{0}^{1} dx \ f(x) \bb{E}^{\mrm{poly},\theta}_{t,x} \sigma(X_t) \int_{0}^{1} dy \ p^{\mrm{dir}}(\kappa,y; \varrho(X_t)) \mbf{1}_{[0,\varrho(X_t)]}(y) \\
            &\quad\quad - \bb{E} F(Y) \int_{0}^{1} dx \ f(x) \bb{E}^{\mrm{poly},\theta}_{t,x,x} \sigma(X_t) \int_{0}^{1} dy \ p^{\mrm{dir}}(\kappa,y; \varrho(X_t)) \mbf{1}_{[0,\varrho(\tilde{X}_t)]}(y) \\
            &\quad\quad + \alpha \bb{E} F(Y) \int_{0}^{1} dx \ f(x) \bb{E}^{\mrm{poly},\theta}_{t,x} \sigma(X_t).
        \end{split}
    \end{equation}
    Substituting our calculations of \eqref{eqn:calculation, noise part}-\eqref{eqn:calculation, initial data part} into \eqref{eqn:lhs of steins}, we conclude finally that
    \begin{align*}
        \bb{E} F(Y)Y = \bb{E} F'(Y) \int_{[0,1]^2} &dx_1 dx_2 \ f(x_1) f'(x_2)\left(\tfrac{1}{2}\zeta_{2\ep}(\tfrac{1}{2}(x_1-x_2)) + \tfrac{1}{2}\zeta_{2\ep}(\tfrac{1}{2}(x_1+x_2))\right) \\
        &+ \Gamma^{\mrm{noise}} + \Gamma^{\mrm{bdy}} + \Gamma^{\mrm{noise}} + \alpha \bb{E}F(Y) \int_{0}^{1} f(x) (\sigma_{\ep}(x)-1) dx + \mc{E}^{\mrm{noise}} + \mc{E}^{\mrm{bdy}}.
    \end{align*}
    The final three terms all vanish as $\ep \to 0$, so we can collectively denote them by $\mc{E}$, giving Proposition \ref{prop:auxiliary result}.

    \section{Proof of Proposition \ref{prop:key symmetry}}\label{sec:key symmetry}

    In this section we prove the key symmetry Proposition \ref{prop:key symmetry}. As a simplifying reduction, we first note that we can replace the upper limit of integration $t$ with $\tau = \inf\{s \geq 0 : X^1_s - X^2_s \in 2{\Z}\} \wedge t$. This is because if $\tau < t$, then the integral can be broken up as the integral from $0$ to $\tau$ plus the integral from $\tau$ to $t$. The latter vanishes because the diffusions $X^1$ and $X^2$ are exchangeable under the polymer expectation after time $\tau$ (by the strong Markov property and 2-periodicity of $R_{\ep}$) while the integral is anti-symmetric. Our goal is therefore to estimate
    \begin{equation}\label{eqn:polymer expectation of stochastic integral}
        \mc{E}_{t,x_1,x_2} = \frac{\bb{E}^{\mrm{BM}}_{x_1,x_2} e^{\sum_{j=1}^{2} \int_{0}^{t} \xi_{\ep}(t-s, X^j_s) - \frac{1}{2}C^j_{\ep}(X^j_s) ds + \int_{0}^{t} V_{\ep}(X^j_s) ds + h_0^{\kappa}(X^j_t)}\int_{0}^{\tau} R_{\ep}(X^1_s - X^2_s) d(X^1_s - X^2_s)}{Z^{\theta}(t,x_1) Z^{\theta}(t,x_2)}
    \end{equation}
    for a fixed realization of the initial data. To keep notation simple, we shall denote by $\mc{X}_{t,x_1,x_2}$ the numerator of \eqref{eqn:polymer expectation of stochastic integral}. We shall also occasionally use the notation $\lesssim$ to mean ``less than or equal to, up to a constant'', with a subscript (if included) indicating what the constant depends on. Our first result is a simple application of Cauchy-Schwarz that leverages uniform estimates on the negative moments of solutions to SHE-type problems to separate the numerator and denominator:

    \begin{lemma}\label{lem:separating numerator and denominator}
        There is a finite constant $C = C(t,\kappa) > 0$ such that $\mbf{E}|\mc{E}_{t,x_1,x_2}| \leq C\left(\mbf{E} \mc{X}_{t,x_1,x_2}^2\right)^{1/2}$.
    \end{lemma}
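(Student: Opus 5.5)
Since $\mc{E}_{t,x_1,x_2} = \mc{X}_{t,x_1,x_2}/(Z^{\theta}(t,x_1)Z^{\theta}(t,x_2))$, Cauchy--Schwarz in $\Omega^{\mrm{noise}}$ (for fixed initial data) gives
\begin{align*}
    \mbf{E}|\mc{E}_{t,x_1,x_2}| \leq \left(\mbf{E}\mc{X}_{t,x_1,x_2}^2\right)^{1/2}\left(\mbf{E} Z^{\theta}(t,x_1)^{-2}Z^{\theta}(t,x_2)^{-2}\right)^{1/2}.
\end{align*}
A second Cauchy--Schwarz bounds the last factor by $\sup_{x}\left(\mbf{E}Z^{\theta}(t,x)^{-4}\right)^{1/2}$. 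It therefore suffices to show that $\sup_{x\in\R}\mbf{E}Z^{\theta}(t,x)^{-4}\le C(t,\kappa)$, uniformly in $\ep\in(0,1]$. The supremum over all of $\R$ reduces to $[0,1]$ by $\varrho$-invariance.

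To bound the negative moment, I would first isolate the deterministic parts of the exponent in \eqref{eqn:approximation to Z}. For a fixed realization of the initial data, $h_0^{\kappa}=e^{\kappa\Delta_{\mrm{neu}}}h_0$ is smooth and bounded, with $\|h_0^{\kappa}\|_\infty \le \|h_0\|_\infty$. Hence $Z^{\theta}\ge e^{-\|h_0\|_\infty}\tilde Z^{\theta}$, where $\tilde Z^{\theta}$ is defined without the initial data term. The boundary term $\int_0^t V_{\ep}(R_s)ds$ is a linear combination of smoothed local times. It is bounded below by $-\frac12(|\hat\alpha|+|\hat\beta|)(\tilde{\ms{L}}^{0}_t+\tilde{\ms{L}}^{1}_t)$, where $\tilde{\ms L}$ denotes the smoothed local times. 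These are dominated by $\int p^{\mrm{neu}}(\ep,x;0)\ms L^x_t\,dx\le \sup_y \ms{L}^y_t$.

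By Jensen's inequality,
\begin{align*}
    Z^{\theta}(t,x)^{-1}\le e^{\|h_0\|_\infty}\,\bb{E}^{\mrm{RBM}}_x\!\left[e^{-\int_0^t\xi_\ep-\frac12 C_\ep\,ds + c\sup_y\ms L^y_t}\right]\Big/\left(\bb{E}^{\mrm{RBM}}_x e^{-c\sup_y\ms L^y_t}\right)^2 .
\end{align*}
More simply, I would restrict $Z^\theta$ to paths with $\sup_y \ms L^y_t\le M$, an event of probability at least $\frac12$ for some $M=M(t)$. Jensen on this restricted path measure $Q_x$ then gives
\begin{align*}
    Z^{\theta}(t,x)^{-4}\lesssim e^{4\|h_0\|_\infty+4cM}\,\bb{E}^{Q_x}\!\left[e^{-4\int_0^t \xi_\ep(t-s,R_s)ds + 2\int_0^t C_\ep(R_s)ds}\right].
\end{align*}
Taking $\mbf{E}$ and using Gaussianity, the expectation equals $\bb{E}^{Q_x}\exp\big(8\,\mathrm{Var}(\int_0^t\xi_\ep(t-s,R_s)ds)-2\int_0^tC_\ep(R_s)ds\big)$. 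The variance equals $\int_0^t C_\ep(R_s)ds$, so the exponent is $6\int_0^tC_\ep(R_s)ds$. This is the point where the argument must be handled with care.

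The main obstacle is that $C_\ep\sim \ep^{-1/2}$, so this naive Jensen bound blows up as $\ep\to0$. The correct tool is the uniform negative moment bound for the stochastic heat equation (of Mueller/Moreno-Flores type), in the form $\sup_{\ep}\sup_x\mbf{E}\tilde Z^{\theta}(t,x)^{-p}<\infty$. This holds because $\tilde Z^\theta$ converges to a strictly positive SHE solution with a Gaussian-type lower tail, i.e.\ $\mbf P(\log \tilde Z^\theta<-r)\le Ce^{-cr^{3/2}}$ or at least $e^{-cr}$ uniformly in $\ep$. I would invoke this known estimate, adapted to Neumann (Robin) boundary conditions on $[0,1]$ via the concentration argument. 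One controls $\mbf P(\tilde Z^\theta(t,x)\le\delta)$ using Talagrand/Gaussian concentration for the convex-type functional $\log\tilde Z^\theta$, whose Malliavin derivative norm is $\int\!\!\int \bb{E}^{\mrm{poly}}[p^{\mrm{neu}}(\ep,y;X_{t-s})]^2\,dy\,ds$, uniformly bounded in $\ep$ because $\int_0^t p^{\mrm{neu}}(2\ep,X_s;\tilde X_s)ds$ has uniformly bounded polymer expectations on high-probability sets. This is the technical heart of the proof. With it, the constant depends only on $t$, on the bounded boundary term (fixed $\alpha$), and on $\kappa$ through the initial data, which gives $C=C(t,\kappa)$.
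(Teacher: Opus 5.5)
Your reduction to $\sup_{x}\mbf{E}Z^{\theta}(t,x)^{-4}\le C$ (uniformly in $\ep$) by two applications of Cauchy--Schwarz is exactly the paper's. Your route to the negative moment bound is genuinely different, and in outline it is valid.

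The paper factors $Z^{\theta}=\mc{Z}^{\theta}\,\bb{E}^{\mrm{Gibbs}}e^{\int_0^t V_{\ep}(X_s)ds}$, with $\mc{Z}^{\theta}$ the smoothed-noise solution of \altref{eqn:SHE}{SHE$_{0,0}$}. It peels off the boundary potential by Jensen under the Gibbs measure and one more Cauchy--Schwarz; the leftover factor is an exponential moment of smoothed intersection and boundary local times, handled as in Lemma \ref{lem:bounds for Z and U}. It then controls $\mbf{E}(\mc{Z}^{\theta})^{-q}$ by adapting the Mueller--Nualart scheme: a large-deviation bound for the smoothed stochastic convolution (via Sowers) plus a comparison principle.

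You instead use that $\log Z^{\theta}(t,x)$ is a convex functional of the driving Gaussian noise. Its Malliavin gradient has squared norm equal to the replica overlap $\bb{E}^{\mrm{poly},\theta}_{t,x,x}\int_0^t p^{\mrm{neu}}(2\ep,X_s;\tilde X_s)ds$, and you get the lower tail from Gaussian concentration. This is more self-contained and more robust. Convexity holds for any nonnegative path weight, so $V_{\ep}$ and $h_0^{\kappa}$ need no separate treatment. The only analytic inputs are the uniform exponential moment bounds the paper already proves. The paper's route leans on existing SHE positivity results, at the price of the Jensen/Cauchy--Schwarz separation and an adaptation of Mueller--Nualart to smoothed noise with Neumann boundary conditions.

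To make your sketch a proof, fix the following.

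\textbf{The detour.} Drop the restricted-path Jensen step entirely. It fails, as you say; in fact the exponent is $10\int_0^t C_{\ep}(R_s)ds$, since $-4\cdot(-\tfrac12 C_{\ep})=+2C_{\ep}$. Neither the restriction to $\{\sup_y\ms{L}^y_t\le M\}$ nor a ``Robin adaptation'' is needed; run the concentration argument on $Z^{\theta}$ itself.

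\textbf{Where uniformity comes from.} Convergence of $Z^{\theta}$ to a strictly positive limit does not by itself give negative moments uniform in $\ep$. The uniformity must come from the concentration step, and that step is missing one ingredient: a reference set. Use Paley--Zygmund with $\mbf{E}Z^{\theta}\ge c$ (by Jensen) and $\mbf{E}(Z^{\theta})^2=\bb{E}^{\mrm{RBM}}_{x,x}e^{\int_0^t p^{\mrm{neu}}(2\ep,R^1_s;R^2_s)ds+\cdots}\le C$. Combine it with Markov's inequality for $(Z^{\theta})^2$ times the overlap, which is again a bounded exponential moment after taking $\mbf{E}$. This gives a set $A$ with $\mbf{P}(A)\ge\delta>0$, uniformly in $\ep$ and $x$, on which $\log Z^{\theta}\ge -a$ and the overlap is at most $L$. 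Positive probability of $A$ suffices; you do not need high probability.

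\textbf{The tail bound.} Convexity then gives $\log Z^{\theta}(\xi+h)\ge -a-L^{1/2}\|h\|_{L^2}$ for $\xi\in A$. Gaussian isoperimetry yields $\mbf{P}(\log Z^{\theta}<-a-L^{1/2}r)\le C_{\delta}e^{-r^2/4}$. This Gaussian lower tail gives every negative moment. Your fallback ``at least $e^{-cr}$'' would give $\mbf{E}(Z^{\theta})^{-4}<\infty$ only if $c>4$.
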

    \begin{proof}
        By applying Cauchy-Schwarz twice, we have that
        \begin{align*}
            \mbf{E}|\mc{E}_{t,x_1,x_2}| \leq \left(\mbf{E} Z^{\theta}(t,x_1)^{-4}\right)^{1/4} \left(\mbf{E} Z^{\theta}(t,x_2)^{-4}\right)^{1/4} \left(\mbf{E} \mc{X}_{t,x_1,x_2}^2\right)^{1/2}.
        \end{align*}
        Accordingly, it suffices to show that for $x \in [0,1]$ and $p \geq 2$, it holds that $\mbf{E} Z^{\theta}(t,x)^{-p} \leq C$ for some finite $C = C(p,t,\kappa) > 0$. To do so, we compare the negative moments of $Z^{\theta}$ to those of the solution $\mc{Z}^{\theta}$ to \altref{eqn:SHE}{SHE$_{0,0}$} with $\xi_{\ep}$ in place of $\xi$ and $\mc{Z}^{\theta}_0 = e^{h_0^{\kappa}}$, which are easier to bound. Specifically, if $\mc{H}^{\theta} = -\int_{0}^{t} (\xi_{\ep}(t-s,B_s) - \frac{1}{2}C_{\ep}(B_s)) ds - h_0^{\kappa}(B_t)$ is let to denote the analogue of $H^{\theta}$ for this object, then (by Feynman-Kac) $\frac{1}{\mc{Z}^{\theta}(t,x)} e^{-\mc{H}^{\theta}}$ is the density of a Gibbs measure $\bb{P}^{\mrm{Gibbs}}$ and we can write
        \begin{align*}
            Z^{\theta}(t,x) = \mc{Z}^{\theta}(t,x) \bb{E}^{\mrm{Gibbs}} e^{\int_{0}^{t} V_{\ep}(X_s) ds}.
        \end{align*}
        By Jensen's inequality, we then have that
        \begin{align*}
            \mbf{E} Z^{\theta}(t,x)^{-p} = \mbf{E}\left[\mc{Z}^{\theta}(t,x)^{-p} \left(\bb{E}^{\mrm{Gibbs}} e^{\int_{0}^{t} V_{\ep}(X_s) ds}\right)^{-p}\right] \leq \mbf{E}\left[\mc{Z}^{\theta}(t,x)^{-(p+1)} \bb{E}^{\mrm{BM}}_x e^{-\mc{H}^{\theta} - p\int_{0}^{t} V_{\ep}(B_s) ds}\right].
        \end{align*}
        Applying Schwarz again, we obtain the bound
        \begin{equation}\label{eqn:bound for negative moments of Z}
            \mbf{E} Z^{\theta}(t,x)^{-p} \leq \left(\mbf{E} \mc{Z}^{\theta}(t,x)^{-2(p+1)}\right)^{1/2} \left(\mbf{E}(\bb{E}^{\mrm{BM}}_x e^{-\mc{H}^{\theta} - p\int_{0}^{t} V_{\ep}(B_s) ds})^2\right)^{1/2}.
        \end{equation}
        For the second factor, we notice that
        \begin{align*}
            \mbf{E}(\bb{E}^{\mrm{BM}}_x e^{-\mc{H} - p\int_{0}^{t} V_{\ep}(B_s) ds})^2 = \mbf{E}\left[\bb{E}^{\mrm{BM}}_{x,x} e^{-\mc{H}_1^{\theta} - \mc{H}_2^{\theta} - p\int_{0}^{t} V_{\ep}(B^1_s) ds - p\int_{0}^{t} V_{\ep}(B^2_s) ds}\right],
        \end{align*}
        where $\mc{H}_1^{\theta}$ and $\mc{H}_2^{\theta}$ denote the versions of $\mc{H}^{\theta}$ corresponding to independent Brownian motions $B^1$ and $B^2$, respectively. By Fubini and the moment generating function for Gaussians, the right-hand side equals
        \begin{align*}
            \bb{E}^{\mrm{BM}}_{x,x} e^{\int_{0}^{t} p^{\mrm{neu}}(2\ep, B^1_s; B^2_s) ds - p\int_{0}^{t} V_{\ep}(B^1_s) ds - p\int_{0}^{t} V_{\ep}(B^2_s) ds + h_0^{\kappa}(B^1_t) + h_0^{\kappa}(B^2_t)},
        \end{align*}
        and the proof of Lemma \ref{lem:bounds for Z and U} (together with further applications of Schwarz) shows that this is bounded by a constant $C = C(t,\kappa) > 0$. We have thus reduced the problem to bounding the first factor in \eqref{eqn:bound for negative moments of Z}, i.e. the negative moments of $\mc{Z}^{\theta}$. For this, we appeal to the argument of \cite[Theorem 2]{mueller2007regularitydensitystochasticheat}. Namely, it is sufficient to show that we have the following large deviation and comparison lemmas:
        \begin{lemma}
            Let $w(t,x)$ be a bounded progressively measurable process. Given $\delta > 0$, there exist constants $C_0, C_1 > 0$ (independent of $\ep$) such that for all $\lambda > 0$ and $T > 0$,
            \begin{equation}\label{eqn:large deviations estimate}
                \mbf{P}\left(\sup_{0 \leq t \leq T} \sup_{0 \leq x \leq 1} \left|\int_{0}^{t} ds \int_{0}^{1} dy \ p^{\mrm{neu}}(t-s,x;y) w(s,y) \xi_{\ep}(s,y)\right| > \lambda\right) \leq C_0 \exp\left(-\frac{C_1 \lambda^2}{T^{1/2-\delta}}\right).
            \end{equation}
        \end{lemma}
        \begin{lemma}
            Let $Z$ and $\tilde{Z}$ denote two solutions to \altref{eqn:SHE}{SHE$_{0,0}$} with forcing noise $\xi_{\ep}$ and initial data $Z(0,x) = z_0(x)$ and $\tilde{Z}(0,x) = \tilde{z}_0(x)$, respectively. Suppose that, $\bb{P}^{\mrm{init}}$-a.s., we have that $z_0(x) \leq \tilde{z}_0(x)$ for all $x \in [0,1]$. Then, $\mbf{P}$-a.s., it holds that $Z(t,x) \leq \tilde{Z}(t,x)$ for all $t \geq 0$ and $x \in [0,1]$.
        \end{lemma}
        Since solutions to \altref{eqn:SHE}{SHE$_{0,0}$} with the regularized forcing noise $\xi_{\ep}$ are strictly positive by Feynman-Kac, the comparison lemma is immediate from linearity. For the large deviations lemma, we note that \eqref{eqn:large deviations estimate} is equivalent to the estimate of \cite[Lemma 3]{mueller2007regularitydensitystochasticheat} with $w \equiv 1$ and kernel
        \begin{align*}
            K_{\ep}^{t,x}(s,y) = \int_{0}^{1} dz \ p^{\mrm{neu}}(t-s, x; z) p^{\mrm{neu}}(\ep, z; y) w(s,z).
        \end{align*}
        This kernel satisfies the regularity condition of \cite[Proposition A.1]{Sowers1992}, since
        \begin{align*}
            \norm{K_{\ep}^{t,x} - K_{\ep}^{s,y}}_{L^2(\R_+ \times [0,1])} &= \norm{e^{\ep\Delta_{\mrm{neu}}} p^{\mrm{neu}}(t-\cdot,\cdot ; x) w - e^{\ep\Delta_{\mrm{neu}}} p^{\mrm{neu}}(s-\cdot,\cdot ; y) w}_{L^2(\R_+ \times [0,1])} \\
            &\lesssim_{w} \norm{p^{\mrm{neu}}(t-\cdot,\cdot ; x) - p^{\mrm{neu}}(s-\cdot,\cdot ; y)}_{L^2(\R_+ \times [0,1])}
        \end{align*}
        by contractivity of the heat semigroup and the boundedness hypothesis on $w$. Accordingly, the large deviations lemma follows from \cite[Proposition A.2]{Sowers1992}.
    \end{proof}

    Our next result is an immediate consequence of the moment generating function for a sum of Gaussians upon realizing that the noise expectation $\mbf{E}$ only ``hits'' the stochastic integral terms $\int_{0}^{t} \xi_{\ep}(t-s,X^j_s) ds$, which for fixed realizations of the underlying Brownian motions are centered Gaussians with covariance $\Sigma_{ij} = \int_{0}^{t} p^{\mrm{neu}}(2\ep, X^i_s;X^j_s)$:

    \begin{lemma}\label{lem:expectation of numerator squared}
        We have that
        \begin{align*}
            \mbf{E} \mc{X}^2_{t,x_1,x_2} = \bb{E}^{\mrm{BM}}_{x_1,x_2,x_1,x_2} e^{\int_{0}^{t} \bm{R}_{\ep} (\bm{X}_s) ds + \int_{0}^{t} \bm{V}_{\ep}(\bm{X}_s) ds + \bm{h}_0^{\kappa}(\bm{X}_t)} \prod_{j=1}^{2} \int_{0}^{\tau_j} R_{\ep}(X^{2j-1}_s - X^{2j}_s) d(X^{2j-1}_s - X^{2j}_s),
        \end{align*}
        where $\bm{X} = (X^j)_{j=1}^{4}$, the $\bm{R}_{\ep}$, $\bm{V}_{\ep}$, and $\bm{h}_0^{\kappa}$ are 2-periodic functions of a variable $\bm{x} \in \R^4$ defined by
        \begin{align*}
            \bm{R}_{\ep}(\bm{x}) = \sum_{1 \leq i < j \leq 4} p^{\mrm{neu}}(2\ep, x_i; x_j), \quad \bm{V}_{\ep}(\bm{x}) = \sum_{j=1}^{4} V_{\ep}(x_j), \quad \bm{h}_0^{\kappa}(\bm{x}) = \sum_{j=1}^{4} h_0^{\kappa}(x_j),
        \end{align*}
        and the $\tau_j$ are defined analogously to $\tau$ as $\tau_j = \inf\{s \geq 0 : X^{2j-1}_s - X^{2j}_s = 0\} \wedge t$.
    \end{lemma}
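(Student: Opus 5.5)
Squaring the numerator $\mc{X}_{t,x_1,x_2}$ of \eqref{eqn:polymer expectation of stochastic integral} turns the single Brownian expectation over the pair $(X^1,X^2)$ into a double expectation. We realize it on four independent Brownian motions $\bm{X} = (X^1,X^2,X^3,X^4)$ started from $(x_1,x_2,x_1,x_2)$: the first copy of $\mc{X}$ uses $(X^1,X^2)$ and the second uses $(X^3,X^4)$. In every copy, the only dependence on the forcing noise $\xi$ sits in the exponential weight. The stochastic integrals $\int_0^{\tau_j} R_\ep(X^{2j-1}_s - X^{2j}_s)\,d(X^{2j-1}_s - X^{2j}_s)$, the stopping times $\tau_j$, the boundary potentials $V_\ep$ and the initial data $h_0^\kappa$ are all functionals of the Brownian paths, with the initial data held fixed. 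Since everything is nonnegative inside the exponential and the factors are integrable (see below), we apply Fubini to write
\[
\mbf{E}\mc{X}^2_{t,x_1,x_2} = \bb{E}^{\mrm{BM}}_{x_1,x_2,x_1,x_2}\Bigl[\mbf{E}\, e^{\sum_{j=1}^4 \int_0^t \xi_\ep(t-s,X^j_s)\,ds}\; e^{\sum_{j=1}^4\int_0^t (V_\ep(X^j_s) - \frac12 C_\ep(X^j_s))\,ds + \bm{h}_0^\kappa(\bm{X}_t)} \prod_{j=1}^2 \int_0^{\tau_j}(\cdots)\Bigr].
\]

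Next we compute the inner noise expectation for fixed paths. The vector $G_j := \int_0^t \xi_\ep(t-s,X^j_s)\,ds$ is centered Gaussian. By the Itô isometry and the semigroup property of $e^{\ep\Delta_{\mrm{neu}}}$ (Chapman--Kolmogorov, as in Section~\ref{sec:proof of auxiliary result, subsec:noise contribution}), its covariance is $\Sigma_{ij} = \int_0^t \int_0^1 p^{\mrm{neu}}(\ep,y;X^i_s)p^{\mrm{neu}}(\ep,y;X^j_s)\,dy\,ds = \int_0^t p^{\mrm{neu}}(2\ep,X^i_s;X^j_s)\,ds$. Here we use $\varrho$-invariance of $p^{\mrm{neu}}$ to pass between reflected and unreflected paths. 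The Gaussian moment generating function gives
\[
\mbf{E} e^{\sum_j G_j} = e^{\frac12\sum_{i,j}\Sigma_{ij}} = e^{\frac12\sum_j \Sigma_{jj} + \sum_{i<j}\Sigma_{ij}}.
\]
The diagonal terms are $\Sigma_{jj} = \int_0^t p^{\mrm{neu}}(2\ep,X^j_s;X^j_s)\,ds = \int_0^t C_\ep(X^j_s)\,ds$, so they cancel exactly against the pathwise normalization $-\frac12 C_\ep$. This is precisely why the normalization was written in the form $C_\ep(X_s)$. What remains is $\int_0^t \bm{R}_\ep(\bm{X}_s)\,ds$. Collecting the $V_\ep$ and $h_0^\kappa$ terms into $\bm{V}_\ep$ and $\bm{h}_0^\kappa$ then yields the stated formula.

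The definition of $\tau_j$ can be kept as stated, with hitting of $0$, or taken as hitting of $2\Z$ as earlier. Either way, $\tau_j$ depends only on the paths, so it passes through unchanged.

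The only point needing care is justifying Fubini, i.e.\ integrability of the combined integrand. This is where I expect the (mild) obstacle to lie. Since $\xi_\ep$ is smooth in space, $\bm{R}_\ep$ and $V_\ep$ are bounded for fixed $\ep$. The initial data $h_0^\kappa$ is bounded for a fixed smooth realization. The stochastic integrals have finite moments of all orders because $R_\ep$ is bounded. So Cauchy--Schwarz against the bounded exponential gives absolute integrability. The Gaussian computation itself can be done conditionally on the paths, which is legitimate because $\xi$ is independent of the Brownian motions.
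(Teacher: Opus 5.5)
Your proposal is correct and follows essentially the same route as the paper. You square the numerator over four independent Brownian paths, apply Fubini, and use the Gaussian moment generating function with covariance $\Sigma_{ij}=\int_0^t p^{\mrm{neu}}(2\ep,X^i_s;X^j_s)\,ds$, whose diagonal cancels the $-\tfrac12 C_\ep$ normalization and whose off-diagonal part produces $\bm{R}_\ep$. You also supply the integrability justification that the paper leaves implicit; note, though, that what is bounded is the noise-averaged factor $\mbf{E}\,e^{\sum_j G_j}=e^{\frac12\sum_{i,j}\Sigma_{ij}}$ (obtained via Tonelli), not $e^{\sum_j G_j}$ itself.
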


    As a next step in the analysis, we view the factor $e^{\int_{0}^{t} \bm{R}_{\ep} (\bm{X}_s) ds + \int_{0}^{t} \bm{V}_{\ep}(\bm{X}_s) ds + \bm{h}_0^{\kappa}(\bm{X}_t)}$ from Lemma \ref{lem:expectation of numerator squared} as the weight of a Gibbs measure $\bb{P}^{\mrm{Gibbs}}_{\bm{x}}$ on paths $\bm{X} \in C_{\bm{x}}([0,t]; \R^4)$ started from $\bm{x} \in \R^4$, that is, we define
    \begin{align*}
        d\bb{P}^{\mrm{Gibbs}}_{\bm{x}}(\bm{X}) = \frac{e^{\int_{0}^{t} \bm{R}_{\ep} (\bm{X}_s) ds + \int_{0}^{t} \bm{V}_{\ep}(\bm{X}_s) ds + \bm{h}_0^{\kappa}(\bm{X}_t)}}{\bb{E}^{\mrm{BM}}_{\bm{x}} e^{\int_{0}^{t} \bm{R}_{\ep} (\bm{X}_s) ds + \int_{0}^{t} \bm{V}_{\ep}(\bm{X}_s) ds + \bm{h}_0^{\kappa}(\bm{X}_t)}} d\bb{P}^{\mrm{BM}}_{\bm{x}}(\bm{X}).
    \end{align*}
    To see why this is useful, notice that by the classical Feynman-Kac formula the denominator above is $\bm{Z}(t,\bm{x})$ for $\bm{Z}$ solving the following heat problem on $\R_+ \times \bb{R}^4$:
    \begin{align*}
        \partial_t \bm{Z} = \frac{1}{2}\Delta \bm{Z} + (\bm{R}_{\ep} + \bm{V}_{\ep})\bm{Z}, \quad \bm{Z}(0,\bm{x}) = e^{\bm{h}_0^{\kappa}(\bm{x})},
    \end{align*}
    (equivalently, since $\bm{R}_{\ep}$, $\bm{V}_{\ep}$, and $\bm{h}_0^{\kappa}$ are all $\varrho$-invariant in each of their spatial variables, we can replace $\R^4$ with $[0,1]^4$ and impose homogeneous Neumann boundary conditions). In particular, $\bm{h}(s,\bm{x}) := \log \bm{Z}(s,\bm{x})$ solves
    \begin{align*}
        \partial_t \bm{h} = \frac{1}{2}\Delta \bm{h} + \frac{1}{2}|\nabla \bm{h}|^2 + (\bm{R}_{\ep} + \bm{V}_{\ep}), \quad \bm{h}(0,\bm{x}) = \bm{h}_0^{\kappa}(\bm{x}),
    \end{align*}
    so by applying It\^{o}'s formula to $\bm{h}(t-s, \bm{X}_s)$ with $\bm{X}$ given by a Brownian motion on $\bb{R}^4$ started at $\bm{x}$, we deduce that
    \begin{align*}
        \bm{h}_0^{\kappa}(\bm{X}_t) = \bm{h}(t,\bm{x}) - \int_{0}^{t} (\bm{R}_{\ep}(\bm{X}_s) + \bm{V}_{\ep}(\bm{X}_s)) ds + \int_{0}^{t} \bm{U}(t-s,\bm{X}_s) \cdot d\bm{X}_s - \frac{1}{2}\int_{0}^{t} |\bm{U}(t-s,\bm{X}_s)|^2 ds,
    \end{align*}
    where $\bm{U}(s,\bm{x}) = \nabla \bm{h}(s,\bm{x})$. It follows that the Radon-Nikodym derivative can be re-written as
    \begin{align*}
        \frac{e^{\int_{0}^{t} \bm{R}_{\ep} (\bm{X}_s) ds + \int_{0}^{t} \bm{V}_{\ep}(\bm{X}_s) + \bm{h}_0^{\kappa}(\bm{X}_t)}}{\bm{Z}(t,\bm{x})} = e^{\int_{0}^{t} \bm{U}(t-s,\bm{X}_s) \cdot d\bm{X}_s - \frac{1}{2}\int_{0}^{t} |\bm{U}(t-s,\bm{X}_s)|^2 ds},
    \end{align*}
    at which point the Girsanov theorem implies that under $\bb{P}^{\mrm{Gibbs}}_{\bm{x}}$ the path $\bm{X}$ satisfies the diffusion equation
    \begin{align*}
        d\bm{X}_s = \bm{U}_{t-s}(\bm{X}_s) ds + d\bm{B}_s, \quad \bm{X}_0 = \bm{x}.
    \end{align*}
    By letting $\bm{U}_s(\bm{x}) = (U^1_s(\bm{x}), \ldots, U^4_s(\bm{x}))$ and $\bm{B}_s = (B^1_s,\ldots,B^4_s)$, it follows that we can then write
    \begin{equation}\label{eqn:numerator squared gibbs measure representation}
        \begin{split}
            &\frac{\mbf{E} \mc{X}_{t,x_1,x_2}^2}{\bm{Z}(t,x_1,x_2,x_1,x_2)} \\
            &\quad\quad\quad= \bb{E}^{\mrm{Gibbs}}_{x_1,x_2,x_1,x_2} \prod_{j=1}^{2} \int_{0}^{\tau_j} R_{\ep}(X^{2j-1}_s - X^{2j}_s) \left([U_{t-s}^{2j-1}(\bm{X}_s) - U_{t-s}^{2j}(\bm{X}_s)] ds + dB^{2j-1}_s - dB^{2j}_s\right).
        \end{split}
    \end{equation}
    In particular, if we introduce the notation
    \begin{align*}
        A_j = \int_{0}^{\tau_j} R_{\ep}(X^{2j-1}_s - X^{2j}_s)[U^{2j-1}_{t-s}(\bm{X}_s) - U^{2j}_{t-s}(\bm{X}_s)]ds, \quad M_j = \int_{0}^{\tau_j} R_{\ep}(X^{2j-1}_s - X^{2j}_s) d(B^{2j-1}_s - dB^{2j}_s),
    \end{align*}
    then \eqref{eqn:numerator squared gibbs measure representation} becomes
    \begin{align*}
        \frac{\mbf{E} \mc{X}_{t,x_1,x_2}^2}{\bm{Z}(t,x_1,x_2,x_1,x_2)} = \bb{E}^{\mrm{Gibbs}}_{x_1,x_2,x_1,x_2} A_1A_2 + \bb{E}^{\mrm{Gibbs}}_{x_1,x_2,x_1,x_2} A_1M_2 + \bb{E}^{\mrm{Gibbs}}_{x_1,x_2,x_1,x_2} A_2M_1 + \bb{E}^{\mrm{Gibbs}}_{x_1,x_2,x_1,x_2} M_1M_2,
    \end{align*}
    which by Cauchy-Schwarz, symmetry, and the fact that $M_1M_2$ is a martingale under the Gibbs measure, is bounded by
    \begin{equation}\label{eqn:bound for gibbs measure representation}
        \bb{E}^{\mrm{Gibbs}}_{x_1,x_2,x_1,x_2} A_1^2 + 2\left(\bb{E}^{\mrm{Gibbs}}_{x_1,x_2,x_1,x_2} A_1^2\right)^{1/2}\left(\bb{E}^{\mrm{Gibbs}}_{x_1,x_2,x_1,x_2} M_2^2\right)^{1/2}.
    \end{equation}
    To now estimate this, we invoke the following lemma.

    \begin{lemma}\label{lem:bounds for Z and U}
        Given $T > 0$, there is a finite positive constant $C = C(T,\kappa)$ such that, for all $t \in (0,T]$ and $\bm{x} \in \R^4$, we have that
        \begin{align*}
            C^{-1} \leq \bm{Z}(t,\bm{x}) \leq C \quad \text{and} \quad |\bm{U}(t,\bm{x})| \leq C |\log\ep|
        \end{align*}
        with the latter holding for $\ep$ sufficiently small.
    \end{lemma}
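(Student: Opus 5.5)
The plan is to treat $\bm{Z}$ through its Feynman--Kac representation
$$\bm{Z}(t,\bm{x})=\bb{E}^{\mrm{BM}}_{\bm{x}}\, e^{\int_0^t(\bm{R}_\ep+\bm{V}_\ep)(\bm{X}_s)ds+\bm{h}_0^\kappa(\bm{X}_t)}$$
and to reduce both bounds to one uniform estimate on Brownian occupation of smeared hyperplanes. The initial data $\bm{h}_0^\kappa$ is held fixed, and the constant is allowed to depend on that realization. Since $h_0^\kappa=e^{\kappa\Delta_{\mrm{neu}}}h_0$ is smooth for fixed $\kappa$, both $\bm{h}_0^\kappa$ and $\nabla\bm{h}_0^\kappa$ are bounded by a constant depending on $\kappa$. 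Every term of $\bm{R}_\ep$ and $\bm{V}_\ep$ has the form $c\,p^{\mrm{neu}}(\delta, x_i\pm x_j;\cdot)$ or $c\,p^{\mrm{neu}}(\ep,x_i;y_0)$ with $y_0\in\{0,1\}$. Each is therefore a one-dimensional approximate delta (periodized over $2\Z$) in a single linear coordinate $\ell(\bm{x})$, composed with a $\bb{R}^4$ Brownian motion. Under the Brownian motion, $\ell(\bm{X}_s)$ is a one-dimensional Brownian motion with variance at most $2$. The key input is then
$$\sup_{\bm{x}}\bb{E}^{\mrm{BM}}_{\bm{x}}\int_0^\delta |W(\bm{X}_s)|\,ds\le C\sqrt{\delta},\qquad W:=\bm{R}_\ep+\bm{V}_\ep,$$
uniformly in $\ep$. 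This holds because $\int_0^\delta p(s,\cdot)\ast p(\ep,\cdot)\,ds\le\int_0^\delta (2\pi s)^{-1/2}ds$.

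\emph{Upper bound.} By H\"older over the finitely many terms in $W$ together with Khasminskii's lemma, the estimate above gives $\sup_{\bm{x}}\bb{E}_{\bm{x}}e^{\lambda\int_0^T|W(\bm{X}_s)|ds}\le C(\lambda,T)$, uniformly in $\ep$. Combined with $\|\bm{h}_0^\kappa\|_\infty<\infty$, this yields $\bm{Z}\le C$.

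\emph{Lower bound.} Jensen's inequality gives
$$\log\bm{Z}(t,\bm{x})\ge-\bb{E}_{\bm{x}}\int_0^t|W(\bm{X}_s)|ds-\|\bm{h}_0^\kappa\|_\infty\ge -C\sqrt{T}-C_\kappa,$$
so $\bm{Z}\ge C^{-1}$.

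For the gradient I write $\bm{U}=\nabla\bm{Z}/\bm{Z}$ and use the lower bound just obtained, so it suffices to show $|\nabla\bm{Z}(t,\bm{x})|\le C|\log\ep|$. I would use the Duhamel formula
$$\bm{Z}(t)=P_te^{\bm{h}_0^\kappa}+\int_0^tP_{t-s}\big(W\bm{Z}(s)\big)ds,$$
where $P$ is the heat semigroup on $\bb{R}^4$. The first term has gradient bounded by $\|\nabla e^{\bm{h}_0^\kappa}\|_\infty\le C_\kappa$. For the second term, the upper bound $\bm{Z}(s)\le C$ uniformly in $s\le T$ gives
$$|\nabla\bm{Z}(t,\bm{x})|\le C_\kappa+C\int_0^t\!\!\int|\nabla p_{t-s}(\bm{x}-\bm{y})|\,|W(\bm{y})|\,d\bm{y}\,ds.$$
For a single term of $W$, only the derivative in the direction normal to the hyperplane matters after integrating out the other directions. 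That contribution is bounded by $\min\{(t-s)^{-1},\,C\ep^{-1/2}(t-s)^{-1/2}\}$. The first bound comes from $\int|\partial p_{r}|\cdot\sup_{\ell}p$-type estimates, namely $\|\partial_\ell p_r\|_{L^1}\cdot\|p_r\|_\infty\sim r^{-1/2}r^{-1/2}$. The second uses $\|W\|_\infty\lesssim\ep^{-1/2}$ together with $\|\nabla p_r\|_{L^1}\lesssim r^{-1/2}$. The periodic copies contribute only a $T$-dependent constant. Splitting the time integral at $t-s=\ep$ then gives $\int_0^\ep \ep^{-1/2}r^{-1/2}dr+\int_\ep^T r^{-1}dr\lesssim 1+|\log\ep|$. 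This proves the claimed bound for small $\ep$.

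The main obstacle is this last estimate. The crude sup-norm bound loses a factor $\ep^{-1/2}$, so the argument must genuinely exploit that $W$ concentrates near codimension-one sets. The sharp one-dimensional convolution bound $\int|\partial_\ell p_r|\,p(\ep,\cdot)\lesssim (r+\ep)^{-1}$ has to be checked carefully, including the periodized sums, and is what produces exactly the $|\log\ep|$. If that direct computation is awkward, I would instead differentiate the Feynman--Kac formula via a Bismut--Elworthy--Li formula on $[t-\ep,t]$ versus $[0,t-\ep]$. That reduces the problem to the same time integral.
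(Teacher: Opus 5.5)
Your proposal is correct and follows the same route as the paper: Feynman--Kac with uniform-in-$\ep$ exponential moments of the smeared (intersection) local times for the upper bound, Jensen for the lower bound, and, for the gradient, the mild/Duhamel formula combined with the bound on $\bm{Z}$ and a kernel estimate of order $r^{-1/2}(r+\ep)^{-1/2}$ whose time integral gives $|\log\ep|$. The differences are cosmetic: Khasminskii's lemma packages the paper's explicit series-plus-Chapman--Kolmogorov bound, and you use the whole-space kernel with a minimum of two bounds where the paper uses the Neumann kernel with $|\partial_x p^{\mrm{neu}}(s,x;y)|\lesssim s^{-1/2}p^{\mrm{neu}}(2s,x;y)$. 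Two small corrections: once you take absolute values, the tangential derivatives do not drop out (they just obey the same $r^{-1/2}(r+\ep)^{-1/2}$ bound), and the periodic images turn your $C\sqrt{\delta}$ into $C(\sqrt{\delta}+\delta)$.
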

    \begin{proof}
        It is sufficient to find individual positive constants $C_1 = C_1(T,\kappa)$, $C_2 = C_2(T,\kappa)$, and $C_3 = C_3(T,\kappa)$ such that $C_1^{-1} \leq \bm{Z}(t,\bm{x}) \leq C_2$ and $|\bm{U}_t(\bm{x})| \leq C_3|\log\ep|$ (for $\ep$ sufficiently small), as we can then take $C = \max(C_1,C_2,C_3)$. With this in mind, we begin by bounding $\bm{Z}(t,\bm{x})$. By iteratively applying Cauchy-Schwarz to the Feynman-Kac representation of $\bm{Z}(t,\bm{x})$, we see that it suffices to bound the quantities
        \begin{align*}
            \bb{E}^{\mrm{BM}}_{x_1,x_2} e^{\lambda\int_{0}^{t} p^{\mrm{neu}}(2\ep,X^1_s;X^2_s) ds}, \quad \bb{E}^{\mrm{BM}}_x e^{\lambda\int_{0}^{t} p^{\mrm{neu}}(\ep,X_s;0) ds}, \quad \bb{E}^{\mrm{BM}}_x e^{\lambda\int_{0}^{t} p^{\mrm{neu}}(\ep,X_s;1) ds}
        \end{align*}
        by a constant $C = C(T) > 0$ for any $\lambda \in \R$. For this, we consider only the first quantity, as an identical argument establishes the same bound for the latter two quantities. Moreover, note that we may as well assume $\lambda > 0$, since otherwise we can bound by 1. With this in mind, we expand the exponential and order the time integrations to get
        \begin{equation}\label{eqn:exp moments of intersection local times}
            \bb{E}^{\mrm{BM}}_{x_1,x_2} e^{\lambda\int_{0}^{t} p^{\mrm{neu}}(2\ep,X^1_s;X^2_s) ds} = \sum_{n \geq 0} \lambda^n \bb{E}^{\mrm{BM}}_{x_1,x_2} \int_{\Delta_n} ds_1 \cdots ds_n \ \prod_{i=1}^{n} p^{\mrm{neu}}(2\ep,X^1_{s_i};X^2_{s_i}),
        \end{equation}
        where $\Delta_n = \{0 < s_n < \cdots < s_1 < t\}$. Next, we note that since $p^{\mrm{neu}}$ is $\varrho$-invariant in each of its spatial variables, we can replace the underlying Brownian motions $X^j$ on $\R$ with reflecting Brownian motions $\tilde{X}^j = \varrho(X^j)$ on $[0,1]$ started at $\tilde{x}_j = \varrho(x_j) \in [0,1]$. By then expanding the expectation and iteratively applying Chapman-Kolmogorov together with the bound $p^{\mrm{neu}}(s,x;y) \leq C s^{-1/2}$ for a constant $C > 0$ that does not depend on $x$ or $y$, one obtains
        \begin{align*}
            \bb{E}^{\mrm{RBM}}_{\tilde{x}_1,\tilde{x}_2} \int_{\Delta_n} ds_1 \cdots ds_n \ \prod_{i=1}^{n} p^{\mrm{neu}}(2\ep,\tilde{X}^1_{s_i}; \tilde{X}^2_{s_i}) \leq C^n \int_{\Delta_n} ds_1 \cdots ds_n s_n^{-1/2} \prod_{j=2}^{n} (s_{j-1} - s_j)^{-1/2} = \frac{C^n (\pi t)^{n/2}}{\Gamma(n/2+1)}.
        \end{align*}
        Substituting this bound into \eqref{eqn:exp moments of intersection local times}, we obtain a convergent series that is increasing termwise in $t$, so that $\bb{E}^{\mrm{BM}}_{x_1,x_2} e^{\lambda\int_{0}^{t} p^{\mrm{neu}}(2\ep,X^1_s;X^2_s) ds} \leq C(T)$, as desired. For a lower bound on $\bm{Z}(t,\bm{x})$, we simply observe that since $\bm{R}_{\ep}(\bm{x}) \geq 0$ for all $\bm{x} \in \R^4$ and $\bm{h}_0^{\kappa}$ is bounded (as a smooth function on $\R^4$ determined by its values on $[0,1]^4$), the Feynman-Kac representation implies that $\bm{Z}(t,\bm{x}) \geq C(\kappa) \bb{E}^{\mrm{BM}}_{\bm{x}} e^{\int_{0}^{t} \bm{V}_{\ep}(\bm{X}_s) ds}$, which by Jensen is $\geq C(\kappa) e^{\bb{E}^{\mrm{BM}}_x \int_{0}^{t} \bm{V}_{\ep}(\bm{X}_s) ds} > C(T,\kappa) > 0$.
        
        Moving on to $\bm{U} = \nabla \log\bm{Z}$, we begin by noting that since $|\bm{U}_t(\bm{x})|^2 = \sum_{i=1}^{4} |U^i_t(\bm{x})|^2$ it suffices (by symmetry) to bound just $|U^1_t(\bm{x})|$ by a constant $C = C(T,\kappa) > 0$ independently of $\bm{x}$. To this end, we note that from the mild from representation of the heat problem satisfied by $\bm{Z}$, we have
        \begin{align*}
            \partial_{x_1} \bm{Z}(t,\bm{x}) = \int_{[0,1]^4} \partial_{x_1}\bm{p}^{\mrm{neu}}(t,\bm{x};\bm{y}) e^{\bm{h}_0^{\kappa}(\bm{y})} d\bm{y} + \int_{0}^{t} ds \int_{[0,1]^4} d\bm{y} \ \partial_{x_1}\bm{p}^{\mrm{neu}}(t-s, \bm{x};\bm{y}) (\bm{R}_{\ep}(\bm{y}) + \bm{V}_{\ep}(\bm{y})) \tilde{\bm{Z}}(s,\bm{y}),
        \end{align*}
        where $\bm{p}^{\mrm{neu}}(t,\bm{x};\bm{y}) = \prod_{j=1}^{4} p^{\mrm{neu}}(t,x_j;y_j)$. By noting that $\int_{0}^{1} \partial_x p^{\mrm{neu}}(t,x_1;y_1) dy_1 = 0$, we can re-write the first term with $e^{\bm{h}_0^{\kappa}(\bm{y})} - e^{\bm{h}_0^{\kappa}(x_1,y_2,y_3,y_4)}$ in place of $e^{\bm{h}_0^{\kappa}(\bm{y})}$ and apply a Lipschitz bound to get, for constants $C,c > 0$,
        \begin{align*}
            \left|\int_{[0,1]^4} \partial_{x_1}\bm{p}^{\mrm{neu}}(t,\bm{x};\bm{y}) e^{\bm{h}_0^{\kappa}(\bm{y})} d\bm{y}\right| \lesssim_{\kappa} \int_{0}^{1} |\partial_{x_1} p^{\mrm{neu}}(t,x_1;y_1)| |x_1-y_1| dy_1 \leq C t^{-1}\int_{\R} |u| e^{-c u^2/t} du = \tilde{C}.
        \end{align*}
        As for the second term, our work for $\bm{Z}$ implies that it is
        \begin{align*}
            \lesssim_{T,\kappa} \int_{0}^{t} ds \int_{[0,1]^4} d\bm{y} \ |\partial_{x_1}\bm{p}^{\mrm{neu}}(t-s, \bm{x}; \bm{y})| (\bm{R}_{\ep}(\bm{y}) + |\bm{V}_{\ep}(\bm{y})|).
        \end{align*}
        We now bound the term obtained here by distributing the integrals onto $\bm{R}_{\ep}(\bm{y})$; an identical argument establishes the same bound for the other term. To this end, we note that by making the change of variables $s \mapsto t-s$, expanding $\bm{p}^{\mrm{neu}}(s,\bm{x}; \bm{y}) = \prod_{j=1}^{4} p^{\mrm{neu}}(s,x_j;y_j)$ and $\bm{R}_{\ep}(\bm{y}) = \sum_{1 \leq i < j \leq 4} p^{\mrm{neu}}(2\ep,y_i;y_j)$, and using the fact that $|\partial_x p^{\mrm{neu}}(s,x;y)| \lesssim s^{-1/2} p^{\mrm{neu}}(2s,x;y)$ (this follows from the fact that $|x| \leq e^{x^2/4}$ for all $x \in \R$, from which it is not hard to see that $|\partial_x p(s,x)| \lesssim s^{-1/2} p(2s,x)$; the extension of this identity to $p^{\mrm{neu}}$ is then immediate from the expansion \eqref{eqn:neumann heat kernel}), this term is bounded (up to a constant) by
        \begin{align*}
            \sum_{1 \leq i < j \leq 4} \int_{0}^{t} ds \ s^{-1/2} \int_{[0,1]^4} d\bm{y} \ p^{\mrm{neu}}(2\ep,y_i;y_j) p^{\mrm{neu}}(2s,x_1;y_1)\prod_{k=2}^{4} p^{\mrm{neu}}(s,x_k;y_k).
        \end{align*}
        At this point, the integrations in $\bm{y}$ can be computed exactly via Chapman-Kolmogorov. For instance, in the case $(i,j) = (1,2)$, we get
        \begin{align*}
            \int_{[0,1]^2} dy_4 dy_3 \ &p^{\mrm{neu}}(s,x_3;y_3)p^{\mrm{neu}}(s, x_4;y_4) \int_{[0,1]^2} dy_2 dy_1 \ p^{\mrm{neu}}(2\ep,y_1;y_2) p^{\mrm{neu}}(2s,x_1;y_1) p^{\mrm{neu}}(s,x_2;y_2) \\
            &= \int_{0}^{1} p^{\mrm{neu}}(s+2\ep,x_2;y_1) p^{\mrm{neu}}(2s,x_1;y_1) dy_1 = p^{\mrm{neu}}(3s+2\ep,x_1;x_2).
        \end{align*}
        In any case, we obtain a bound of $(s+\ep)^{-1/2}$ and so $|\partial_{x_1} \bm{Z}(t,\bm{x})| \leq C(\kappa) + \tilde{C}(T,\kappa)\int_{0}^{t} s^{-1/2} (s+\ep)^{-1/2} ds$, which for sufficiently small $\ep$ is $\lesssim_{T,\kappa} |\log \ep|$. Together with our previous bounds for $\bm{Z}$, we conclude finally that $|U^1_t(\bm{x})| = \bm{Z}(t,\bm{x})^{-1} |\partial_{x_1} \bm{Z}(t,\bm{x})| \lesssim_{T,\kappa} |\log \ep|$, completing the proof.
    \end{proof}

    Using Lemma \ref{lem:bounds for Z and U}, we now bound the terms appearing in \eqref{eqn:bound for gibbs measure representation}. Beginning with $\bb{E}^{\mrm{Gibbs}}_{x_1,x_2,x_1,x_2} A_1^2$, we have
    \begin{align*}
        \bb{E}^{\mrm{Gibbs}}_{x_1,x_2,x_1,x_2} A_1^2 \lesssim_{t,\kappa} |\log\ep|^2 \bb{E}^{\mrm{BM}}_{x_1,x_2,x_1,x_2} e^{\int_{0}^{t} \bm{R}_{\ep} (\bm{X}_s) ds + \int_{0}^{t} \bm{V}_{\ep}(\bm{X}_s) ds + \bm{h}_0^{\kappa}(\bm{X}_t)} \left(\sint_{0}^{\tau_1} R_{\ep}(X^1_s - X^2_s)ds \right)^2.
    \end{align*}
    By applying Cauchy-Schwarz to the second factor and bounding the components separately as we did for $\bm{Z}$ in the proof of Lemma \ref{lem:bounds for Z and U}, we deduce that
    \begin{equation}\label{eqn:bound for A_1}
        \bb{E}^{\mrm{Gibbs}}_{x_1,x_2,x_1,x_2} A_1^2 \lesssim_{t,\kappa} |\log\ep|^2 \left(\bb{E}^{\mrm{BM}}_{x_1,x_2} \left(\sint_{0}^{\tau_1} R_{\ep}(X^1_s - X^2_s)ds\right)^4\right)^{1/2}
    \end{equation}
    provided $\ep$ is sufficiently small. As for $\bb{E}^{\mrm{Gibbs}}_{x_1,x_2,x_1,x_2} M_2^2$, by It\^{o}'s isometry we have 
    \begin{align*}
        \bb{E}^{\mrm{Gibbs}}_{x_1,x_2,x_1,x_2} M_2^2 = \bb{E}^{\mrm{Gibbs}}_{x_1,x_2,x_1,x_2} \int_{0}^{\tau_2} R_{\ep}^2(X^3_s - X^4_s) ds,
    \end{align*}
    at which point we can employ the same line of argumentation (i.e. Lemma \ref{lem:bounds for Z and U} and Cauchy-Schwarz) to obtain
    \begin{equation}\label{eqn:bound for M_2}
        \bb{E}^{\mrm{Gibbs}}_{x_1,x_2,x_1,x_2} M_2^2 \lesssim_{t,\kappa} \left(\bb{E}^{\mrm{BM}}_{x_1,x_2} \left(\sint_{0}^{\tau_2} R_{\ep}^2(X^3_s - X^4_s)ds\right)^2\right)^{1/2}.
    \end{equation}
    Putting together everything we've established so far, we have (by Lemmas \ref{lem:separating numerator and denominator}, \ref{lem:bounds for Z and U}, and \eqref{eqn:numerator squared gibbs measure representation})
    \begin{equation}\label{eqn:stochastic integral estimates everything together}
        \mbf{E}|\mc{E}_{t,x_1,x_2}| \lesssim_{t,\kappa} \left(\tfrac{\mbf{E} \mc{X}_{t,x_1,x_2}^2}{\bm{Z}(t,x_1,x_2,x_1,x_2)}\right)^{1/2} \lesssim_{t,\kappa} \left(\bb{E}^{\mrm{Gibbs}}_{x_1,x_2,x_1,x_2} A_1^2 + 2\left(\bb{E}^{\mrm{Gibbs}}_{x_1,x_2,x_1,x_2} A_1^2\right)^{1/2}\left(\bb{E}^{\mrm{Gibbs}}_{x_1,x_2,x_1,x_2} M_2^2\right)^{1/2}\right)^{1/2}
    \end{equation}
    for all sufficiently small $\ep$, where the terms on the right-hand side are bounded as in \eqref{eqn:bound for A_1} and \eqref{eqn:bound for M_2}. We have thus reduced the problem to estimating
    \begin{align*}
        \bb{E}^{\mrm{BM}}_{x_1,x_2} \left(\sint_{0}^{\tau_1} R_{\ep}(X^1_s - X^2_s)ds\right)^4 \quad \text{and} \quad \bb{E}^{\mrm{BM}}_{x_1,x_2} \left(\sint_{0}^{\tau_1} R_{\ep}^2(X^1_s - X^2_s)ds\right)^2.
    \end{align*}
    We claim that
    \begin{equation}\label{eqn:moment estimates of integral of R}
        \begin{split}
            \bb{E}^{\mrm{BM}}_{x_1,x_2} \left(\sint_{0}^{\tau_1} R_{\ep}(X^1_s - X^2_s)ds\right)^4 \lesssim_t \ep^{2}|\log \ep|^4 \quad \text{and} \quad \bb{E}^{\mrm{BM}}_{x_1,x_2} \left(\sint_{0}^{\tau_1} R_{\ep}^2(X^1_s - X^2_s)ds\right)^2 \lesssim_t |\log \ep|^2.
        \end{split}
    \end{equation}
    Indeed, under $\bb{P}^{\mrm{BM}}_{x_1,x_2}$ the process $B = X^1 - X^2$ is a variance 2 Brownian motion started at $x = x_1 - x_2$. Accordingly, we can re-write the former as
    \begin{align*}
        \bb{E}_x \left(\sint_{0}^{\tau} R_{\ep}(B_s) ds\right)^4,
    \end{align*}
    where now $\tau = \inf\{s \geq 0 : B_s \in 2{\Z}\} \wedge t$ (we have been deliberate to use the notation $\bb{E}_x$ instead of $\bb{E}^{\mrm{BM}}_x$ since $B$ is not of unit variance, but this is rather unimportant). By decomposing $R_{\ep} = R_{\ep} \mbf{1}_{R_{\ep} \geq \ep} + R_{\ep} \mbf{1}_{R_{\ep} < \ep}$ and noting that for $\gamma = \ep^{1/2} |\log\ep|^{1/2}$ there is a constant $C > 0$ independent of $\ep$ so that $R_{\ep} \mbf{1}_{R_{\ep} \geq \ep} \leq C\ep^{-1/2}\sum_{n \in \Z}\mbf{1}_{[2n-C\gamma, 2n+C\gamma]}$, we deduce that
    \begin{equation}\label{eqn:bound for fourth moment of integral of R}
        \bb{E}_x \left(\sint_{0}^{\tau} R_{\ep}(B_s) ds\right)^4 \lesssim t^4 \ep^4 + \ep^{-2} \bb{E}_x \left(\sint_{0}^{\tau} \ssum_{n \in \Z} \mbf{1}_{[2n-C\gamma, 2n+C\gamma]}(B_s) ds\right)^4.
    \end{equation}
    Now, by definition, there are only two intervals of the form $[2n-C\gamma,2n+C\gamma]$ that $B$ can possibly enter up to time $\tau$. These intervals are determined uniquely by the starting point $x$. To bound the second term, it thus suffices (by Minkowski's inequality and translation invariance of Brownian motion) to bound $\bb{E}_x \left(\sint_{0}^{\tau} \mbf{1}_{[-C\gamma, C\gamma]}(B_s) ds\right)^4$. To do so, we note that by symmetry and the strong Markov property, we have
    \begin{align*}
        \bb{E}_x \left(\sint_{0}^{\tau} \mbf{1}_{[-C\gamma, C\gamma]}(B_s) ds\right)^4 \lesssim \bb{E}_0 \left(\sint_{0}^{\tau_{C\gamma}} \mbf{1}_{[0, C\gamma]}(B_s) ds\right)^4
    \end{align*}
    for $\tau_{C\gamma} = \inf\{s \geq 0 : B_s = C\gamma\} \wedge t$. By changing variables $s \mapsto \gamma^2 s$ and applying Brownian scaling, the right-hand side becomes
    \begin{align*}
        \gamma^8\bb{E}^{\mrm{BM}}_0 \left(\sint_{0}^{\tau_C} \mbf{1}_{[0, C]}(\tilde{B}_s) ds\right)^4 = \ep^4 |\log\ep|^4 \bb{E}^{\mrm{BM}}_0 \left(\sint_{0}^{\tau_C} \mbf{1}_{[0, C]}(\tilde{B}_s) ds\right)^4,
    \end{align*}
    where $\tilde{B}$ is another variance 2 Brownian motion and $\tau_C = \inf\{s \geq 0 : \tilde{B}_s = C\} \wedge (\gamma^{-2} t)$. The same argument used in \cite{gu2024integrationpartsinvariantmeasure} implies that $\bb{E}^{\mrm{BM}}_0 (\int_{0}^{\tau_C} \mbf{1}_{[0, C]}(\tilde{B}_s))^4 \lesssim 1$, so that \eqref{eqn:bound for fourth moment of integral of R} becomes
    \begin{align*}
        \bb{E}^{\mrm{BM}}_x \left(\sint_{0}^{\tau} R_{\ep}(B_s) ds\right)^4 \lesssim_t \ep^2 |\log\ep|^4,
    \end{align*}
    which is the first estimate in \eqref{eqn:moment estimates of integral of R}. The second estimate is proved by an identical argument. Substituting these estimates into \eqref{eqn:stochastic integral estimates everything together}, we obtain Proposition \ref{prop:key symmetry}.
    
    \printbibliography
\end{document}